\documentclass[oneside,10pt]{article}
\usepackage[b5paper]{geometry}	

\usepackage{amsfonts,amsmath,latexsym,amssymb}
\usepackage{mathrsfs,upref}
\usepackage{mathptmx}		    	                            		
\usepackage{amscd}
\usepackage{amsthm}
\usepackage{graphicx}
\usepackage{amsmath}
\usepackage{lipsum}

\newcommand\blfootnote[1]{%
  \begingroup
  \renewcommand\thefootnote{}\footnote{#1}%
  \addtocounter{footnote}{-1}%
  \endgroup}

\newtheorem{theorem}{Theorem}

\newtheorem{corollary}[theorem]{Corollary}

\newtheorem{definition}[theorem]{Definition}

\newtheorem{lemma}[theorem]{Lemma}

\newtheorem{proposition}[theorem]{Proposition}
\newtheorem{remark}[theorem]{Remark}

\newcommand\supp{\mathop{\rm supp}}
\newcommand\esssup{\mathop{\rm ess \, sup}}
\newcommand\essinf{\mathop{\rm ess \, inf}}

\begin{document}

\title{A note on variable (Hardy-)Lorentz spaces}
\author{Pablo Rocha}

\maketitle

\begin{abstract}
The purpose of this note is to establish further properties of the variable Lorentz spaces $\mathfrak{L}^{p(\cdot), q(\cdot)}(\mathbb{R}^n)$ introduced by L. Ephremidze, V. Kokilashvili and S. Samko, which will allow us to apply the theory of Hardy spaces associated with ball quasi-Banach function spaces, and so define the variable Hardy-Lorentz spaces associated with $\mathfrak{L}^{p(\cdot), q(\cdot)}(\mathbb{R}^n)$. Then, the finite and infinite atomic decompositions for these spaces will be deduced immediately. We also obtain the boundedness of singular integrals and fractional type operators in variable Hardy-Lorentz spaces, and provide a Fefferman-Stein vector-valued inequality for the fractional maximal operator on the $r$-convexification of variable Lorentz spaces. 
\end{abstract}

\blfootnote{{\bf Keywords}: variable Lorentz spaces, variable Hardy-Lorentz spaces, quasi-Banach function spaces, singular integrals, fractional type operators, fractional maximal. \\
{\bf 2020 Mathematics Subject Classification:} 42B35, 47A30, 46B10, 42B30, 42B25, 42B20}

\section{Introduction}

L. Ephremidze, V. Kokilashvili and S. Samko in \cite{Ephre} introduced the variable Lorentz spaces 
$\mathfrak{L}^{p(\cdot), q(\cdot)}(\Omega)$, where $\Omega \subseteq \mathbb{R}^n$ is an open set and 
$p(\cdot), q(\cdot) : (0, |\Omega|) \to (0, \infty)$ are certain variable exponents. When, $p(\cdot) = p$ and $q(\cdot) = q$ 
are constants, the spaces $\mathfrak{L}^{p(\cdot), q(\cdot)}(\Omega)$ coincide with the classical Lorentz spaces $L^{p, q}(\Omega)$.
However, the spaces $\mathfrak{L}^{p(\cdot), p(\cdot)}(\Omega)$ might not coincide with the variable Lebesgue spaces 
$L^{p(\cdot)}(\Omega)$, since in the definition of these spaces the exponent $p(\cdot)$ is defined on whole $\Omega$. 
In \cite{Ephre} the authors, assuming decay conditions of log-type on the exponents $p(t)$ and $q(t)$ as $t \to 0^{+}$ and $t \to \infty$, obtained the boundedness of singular integrals and fractional type operators, and corresponding ergodic operators in the spaces 
$\mathfrak{L}^{p(\cdot), q(\cdot)}(\Omega)$ (see also \cite{Diening2}). On the other hand, H. Kempka and J. Vyb\'iral in \cite{Kempka} introduced another variant of variable Lorentz spaces $L_{p(\cdot), q(\cdot)}(\mathbb{R}^n)$, where $p(\cdot), q(\cdot) : \mathbb{R}^n \to (0, \infty)$ are certain variable exponents. They investigated their basic properties, among them, proved the identity 
$L_{p(\cdot), p(\cdot)}(\mathbb{R}^n) = L^{p(\cdot)}(\mathbb{R}^n)$. They also showed that 
$\left(L^{p(\cdot)}(\mathbb{R}^n), \, L^{\infty}(\mathbb{R}^n) \right)_{\theta, q} = L_{\widetilde{p}(\cdot), q}(\mathbb{R}^n)$, 
where $q$ is a positive constant, $0 < \theta < 1$ and $\frac{1}{\widetilde{p}(\cdot)} = \frac{1-\theta}{p(\cdot)}$ (i.e.: the spaces 
$L_{p(\cdot), q}$ arise through real interpolation between $L^{p(\cdot)}$ and $L^{\infty}$). Moreover, they answered in a
negative way the Question 2.8 posed in \cite{Diening}. This question concerns validity of Marcinkiewicz interpolation theorem in the variable context.

Later, V. Almeida, J. J. Betancor and L. Rodr\'iguez-Mesa in \cite{Almeida} introduced anisotropic Hardy–Lorentz spaces with variable exponents $H^{p(\cdot), q(\cdot)}(\mathbb{R}^n, A)$ associated with dilations $A$ in $\mathbb{R}^n$ and modelled on $\mathfrak{L}^{p(\cdot), q(\cdot)}(\mathbb{R}^n)$. They also established maximal characterizations and atomic decompositions for these variable anisotropic Hardy–Lorentz spaces.

Recently, Y. Jiao, Y. Zuo, D. Zhou and L. Wu in \cite{Jiao} presented another kind of variable Hardy-Lorentz spaces 
$H^{p(\cdot), q}(\mathbb{R}^n)$ fashioned on $L_{p(\cdot), q}(\mathbb{R}^n)$, where $q$ is a positive constant. They obtained an atomic decomposition for elements of $H^{p(\cdot), q}(\mathbb{R}^n)$, and developed a theory of real interpolation and formulate the dual space of the variable Hardy–Lorentz space for $0 < p_{-} \leq p_{+} \leq 1$ and $0 < q < \infty$. They also investigated the boundedness of singular operators on $H^{p(\cdot), q}(\mathbb{R}^n)$ for $0 < p_{-} \leq p_{+} < \infty$ and $0 < q < \infty$.

The purpose of this work is to incorporate to the spaces $\mathfrak{L}^{p(\cdot), q(\cdot)}(\mathbb{R}^n)$ within the framework developed by Y. Sawano, K.-P. Ho, D. Yang and S. Yang in \cite{Sawano}, and so define and study the variable Hardy-Lorentz spaces associated with $\mathfrak{L}^{p(\cdot), q(\cdot)}(\mathbb{R}^n)$ from this broader perspective.

\

The paper is organized as follows.

In Section 2, we present the variable Lorentz spaces $\mathfrak{L}^{p(\cdot), q(\cdot)}(\mathbb{R}^n)$ following to \cite{Ephre}, and provide 
some further properties of these spaces. For instance, we will show that the quasi-norm of 
$\mathfrak{L}^{p(\cdot), q(\cdot)}(\mathbb{R}^n)$ is absolutely continuous when $q(0) \leq p(0)$ and $p(\infty) \leq q(\infty)$, a H\"older's inequality will be established for these spaces and will also prove that the K\"othe dual $(\mathfrak{L}^{p(\cdot), q(\cdot)}(\mathbb{R}^n))' = \mathfrak{L}^{p'(\cdot), q'(\cdot)}(\mathbb{R}^n)$. This identity was established without proof in \cite[Lemma 2.7]{Ephre}, and subsequently used in \cite{Almeida} and \cite{Zhao}. Assuming $q(0) \leq p(0)$ and $p(\infty) \leq q(\infty)$, we will show that 
$(\mathfrak{L}^{p(\cdot), q(\cdot)}(\mathbb{R}^n))'$ is isomorphic to the dual space $(\mathfrak{L}^{p(\cdot), q(\cdot)}(\mathbb{R}^n))^{*}$. Section 2 concludes with a Fefferman-Stein valued-vector inequality for the Hardy-Littlewood maximal operator.

In Section 3, we define the variable Hardy-Lorentz spaces $H^{p(\cdot), q(\cdot)}(\mathbb{R}^n)$ associated with 
$\mathfrak{L}^{p(\cdot), q(\cdot)}(\mathbb{R}^n)$. By means of the properties described in Section 2 and the theories developed by 
X. Yan, D. Yang and W. Yuan in \cite{Yan}, and by Y. Sawano et al. in \cite{Sawano}, we obtain the finite and infinite atomic decompositions for the spaces $H^{p(\cdot), q(\cdot)}(\mathbb{R}^n)$ respectively.

Finally, in Section 4 we obtain the boundedness of singular integrals and fractional type operators in $H^{p(\cdot), q(\cdot)}(\mathbb{R}^n)$, as well as a Fefferman-Stein vector-valued inequality for the fractional maximal operator on the $r$-convexification of 
$\mathfrak{L}^{p(\cdot), q(\cdot)}(\mathbb{R}^n)$. These results are based on two previous works of the present author, which were realized within the frame of the ball quasi-Banach function spaces (see \cite{Rocha} and \cite{Rocha2}).

\

\textbf{Notation:} The symbol $S \lesssim T$ stands for the inequality $S \leq cT$ for some constant $c$. The symbol $S \approx T$ 
stands for $T \lesssim S \lesssim T$. We denote by $Q(x_0, r)$ the cube centered at $x_0 \in \mathbb{R}^{n}$ with side lenght $r$. 
Given $\gamma >0$ and a cube $Q = Q(x_0, r)$, we set $\gamma Q = Q(x_0, \gamma r)$. Denote by $\mathcal{Q}$ the set of all cubes 
having their edges parallel to the coordinate axes. The orthogonal group $\mathcal{O}(n)$ is defined by 
$\mathcal{O}(n):=\{ A \in GL_n(\mathbb{R}) : A^{t}= A^{-1} \}$. The group $\mathcal{O}(n)$ induces an action on functions by 
$f_{A}(x) = f(A^{-1}x)$, where $A \in \mathcal{O}(n)$. For a measurable subset $E \subset \mathbb{R}^{n}$ we denote 
$|E|$ and $\chi_E$ the Lebesgue measure of $E$ and the characteristic function of $E$ respectively. 
Given a real number $s \geq 0$, we write $\lfloor s \rfloor$ for the integer part of $s$. As usual we denote with 
$\mathcal{S}(\mathbb{R}^{n})$ the space of smooth and rapidly decreasing functions, with 
$\mathcal{S}'(\mathbb{R}^{n})$  the dual space. If $\beta$ is the multiindex $\beta=(\beta_1, ..., \beta_n)$, then 
$|\beta| = \beta_1 + ... + \beta_n$. Given a function $g$ on $\mathbb{R}^n$ and $t > 0$, we write $g_t(x) = t^{-n} g(t^{-1} x)$. Let $d$ be a non negative integer and $\mathcal{P}_{d}$ the subspace of $L^{1}_{loc}(\mathbb{R}^{n})$ formed by all the polynomials of degree at most $d$. Given a measurable function $h$, the expression $h \perp \mathcal{P}_{d}$ stands for $\int h(x) P(x) dx = 0$ for all $P \in \mathcal{P}_{d}$.

Throughout this paper, $C$ will denote a positive constant, not necessarily the same at each occurrence.

\section{Variable Lorentz spaces}

Now, we present the variable Lorentz spaces following to \cite{Ephre}. A measurable function $p(\cdot) : (0, \infty) \to (0, \infty)$ is called a variable exponent on $(0, \infty)$. Denote by $\mathcal{P}(0, \infty)$ the collection of all variable exponents $p(\cdot)$ on 
$(0, \infty)$. Let $\mathcal{P}_{0}(0, \infty)$ be the collection of all $p(\cdot) \in \mathcal{P}(0, \infty)$ satisfying
\[
0 < p_{-} := \essinf_{t \in (0, \infty)} p(t) \leq \esssup_{t \in (0, \infty)} p(t) =: p_{+} < \infty.
\]
If $p_{-} > 1$, define the conjugate exponent $p'(\cdot)$ by $\frac{1}{p'(t)} := 1 - \frac{1}{p(t)}$.
 
Given $0 \leq a < \infty$, we set
\[
\mathcal{P}_{a}(0,\infty) = \{ p(\cdot) \in \mathcal{P}(0, \infty) : a < p_{-} \leq p_{+} < \infty \}.
\]
We are interested in the special cases $a = 0$, or $a = 1$.

\begin{definition}
By $\mathbb{P}_0(0, \infty)$ we denote the class of variable exponents $p(\cdot) \in \mathcal{P}_{0}(0, \infty)$ such that there exist the limits
\[
p(0) := \lim_{t \to 0^{+}} p(t) \,\,\,\, \text{and} \,\,\,\, p(\infty) := \lim_{t \to \infty} p(t),
\]
and the conditions
\[
|p(t) - p(0)| \leq \frac{C}{-\log(t)}, \,\, 0 < t < \frac{1}{2}, \,\,\,\,\,\, \text{and} \,\,\,\,\,\, |p(t) - p(\infty)| \leq 
\frac{C}{\log(e + t)}, \,\, 0 < t < \infty
\]
are satisfied for some positive constant $C$ independent of $t$. Given $0 < a < \infty$, we also denote
\[
\mathbb{P}_{a}(0, \infty) = \mathbb{P}_0(0, \infty) \cap \mathcal{P}_{a}(0, \infty). 
\]
\end{definition}

The non-increasing rearrangement of a measurable function $f$ on $\mathbb{R}^n$ is defined by
\[
f^{*}(t) = \inf\left\{ s \geq 0 : |\{ x \in \mathbb{R}^n : |f(x)| > s \}| \leq t \right\}, \,\,\, \text{for all} \,\,\, 0 < t < \infty.
\]
Some elementary properties of the function $f^{*}$ are (see \cite{Grafakos})
\[
f^{*}(t) = |f|^{*}(t), \,\,\,\, \text{for all} \,\,\, 0 < t < \infty,
\]
\[
(\lambda f)^{*}(t) = |\lambda| f^{*}(t), \,\,\,\, \text{for} \,\,\, \lambda \in \mathbb{C} \,\,\, \text{and all} \,\,\, 0 < t < \infty,
\]
\[
(f + g)^{*}(t) \leq f^{*}(t/2) + g^{*}(t/2), \,\,\,\, \text{for all} \,\,\, 0 < t < \infty,
\]
\[
|g(x)| \leq |f(x)| \,\, \text{a.e.}x \in \mathbb{R}^n \,\,\, \text{implies that} \,\,\, g^{*}(t) \leq f^{*}(t), \,\,\,\, \text{for all} 
\,\,\, 0 < t < \infty,
\]
\[
|\{ t \in (0, \infty) : f^{*}(t) > s \}| = |\{ x \in \mathbb{R}^n : |f(x)| > s \}|, \,\,\,\, \text{for all} \,\,\, s > 0,
\]
and
\[
(|f|^s)^{*} \leq (f^{*})^s, \,\,\,\, \text{when} \,\,\, 0 < s < \infty.
\]

\begin{definition} \label{var Lorentz}
Let $p(\cdot)$ and $q(\cdot) \in \mathcal{P}_{0}(0, \infty)$. By $\mathfrak{L}^{p(\cdot), q(\cdot)}(\mathbb{R}^n)$ we denote the space of all measurable functions $f$ on $\mathbb{R}^n$ such that
\[
t^{\frac{1}{p(t)} - \frac{1}{q(t)}} f^{*}(t) \in L^{q(\cdot)}(0, \infty),
\]
that is 
\[
\rho_{p(\cdot), q(\cdot)}(f):= \int_{0}^{\infty} t^{\frac{q(\cdot)}{p(\cdot)} - 1} [f^{*}(t)]^{q(t)} dt < \infty,
\]
and, for every $f \in \mathfrak{L}^{p(\cdot), q(\cdot)}(\mathbb{R}^n)$, we consider the Luxemburg norm
\[
\| f \|_{\mathfrak{L}^{p(\cdot), q(\cdot)}} = \inf \left\{ \lambda > 0 : \rho_{p(\cdot), q(\cdot)}(f/\lambda) \leq 1 \right\} =
\left\|  t^{\frac{1}{p(\cdot)} - \frac{1}{q(\cdot)}} f^{*}(t) \right\|_{L^{q(\cdot)}(0, \infty)}.
\]
The space $\mathfrak{L}^{p(\cdot), q(\cdot)}(\mathbb{R}^n)$ is called the variable Lorentz space.
\end{definition}

\begin{remark} \label{equivalente}
In the case $p(\cdot)$ and $q(\cdot) \in \mathbb{P}_{0}(0, \infty)$, we have that
\[
\int_{0}^{\infty} t^{\frac{q(\cdot)}{p(\cdot)} - 1} [f^{*}(t)]^{q(t)} dt \approx \int_{0}^{1/2} t^{\frac{q(0)}{p(0)} - 1} [f^{*}(t)]^{q(t)} dt + \int_{1/2}^{\infty} t^{\frac{q(\infty)}{p(\infty)} - 1} [f^{*}(t)]^{q(t)} dt,
\]
where the implicit constants do not depend on $f$.
\end{remark}

\begin{lemma} \label{Lpqs}
Let $p(\cdot)$ and $q(\cdot) \in \mathcal{P}_{0}(0, \infty)$. Then, for any $s \in (0, \infty)$ and any measurable function $f$, one has
\[
\| f \|_{\mathfrak{L}^{p(\cdot), q(\cdot)}}^{s} = \| |f|^s \|_{\mathfrak{L}^{\frac{p(\cdot)}{s}, \frac{q(\cdot)}{s}}}.
\]
\end{lemma}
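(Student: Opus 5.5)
The plan is to compare the modulars directly and then pass to the Luxemburg norms. The key observation is that the rearrangement commutes with powers: $(|f|^s)^{*}(t) = [f^{*}(t)]^s$ for every $t>0$. This is an equality, not merely the inequality listed above. To see it, note that $|\{|f|^s > u\}| = |\{|f| > u^{1/s}\}|$ for $u \geq 0$. Hence, directly from the definition of $f^*$ and the substitution $u = v^s$,
\[
(|f|^s)^{*}(t) = \inf\{u \geq 0 : |\{|f|>u^{1/s}\}| \leq t\} = \bigl(\inf\{v\ge 0 : |\{|f|>v\}|\le t\}\bigr)^s .
\]
The last step uses that $v\mapsto v^s$ is an increasing bijection of $[0,\infty)$, and it also covers the case where the infimum is $+\infty$.

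Next I compute the modular of $|f|^s/\lambda$ in $\mathfrak{L}^{p(\cdot)/s, q(\cdot)/s}$ for $\lambda>0$. The exponent of $t$ is $\frac{q(t)/s}{p(t)/s} - 1 = \frac{q(t)}{p(t)}-1$, which is unchanged. The rearrangement term becomes
\[
[(|f|^s/\lambda)^{*}(t)]^{q(t)/s} = \bigl[\lambda^{-1}(f^{*}(t))^s\bigr]^{q(t)/s} = [f^{*}(t)/\lambda^{1/s}]^{q(t)}.
\]
It follows that
\[
\rho_{\frac{p(\cdot)}{s},\frac{q(\cdot)}{s}}(|f|^s/\lambda) = \rho_{p(\cdot),q(\cdot)}(f/\lambda^{1/s}).
\]
I also note that $p(\cdot)/s$ and $q(\cdot)/s$ still lie in $\mathcal{P}_0(0,\infty)$, so the right-hand norm in the statement is defined.

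Finally, taking infima gives
\[
\| |f|^s\|_{\mathfrak{L}^{p/s,q/s}} = \inf\{\lambda>0 : \rho_{p,q}(f/\lambda^{1/s})\le 1\}.
\]
Substituting $\mu=\lambda^{1/s}$, which is an increasing bijection of $(0,\infty)$, turns this into $\inf\{\mu^s : \rho_{p,q}(f/\mu)\le1\} = \|f\|_{\mathfrak{L}^{p,q}}^s$. When the admissible set is empty both sides equal $+\infty$, with the usual convention that the infimum of the empty set is $+\infty$.

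The only genuinely delicate point is the equality of rearrangements in the first step. I will check it carefully with the monotone substitution, including the edge case $u=0$, where $\{|f|^s>0\} = \{|f|>0\}$.
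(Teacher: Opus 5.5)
Your proof is correct and follows the same route as the paper. Both arguments rewrite the modular of $|f|^s/\lambda$ in $\mathfrak{L}^{p(\cdot)/s,\,q(\cdot)/s}$ as $\rho_{p(\cdot),q(\cdot)}(f/\lambda^{1/s})$ and then substitute $\mu=\lambda^{1/s}$ in the infimum. Your explicit check that $(|f|^s)^{*}=(f^{*})^s$ holds with equality is a worthwhile addition: the paper cites ``the properties of $f^{*}$'' but lists only the inequality $(|f|^s)^{*}\le (f^{*})^s$, and that inequality alone gives just $\| |f|^s\|_{\mathfrak{L}^{p(\cdot)/s,\,q(\cdot)/s}}\le \|f\|_{\mathfrak{L}^{p(\cdot),q(\cdot)}}^s$.
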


\begin{proof}
Fix $s \in (0, \infty)$. Then, by definition and the properties of $f^{*}$
\[
\| |f|^s \|_{\mathfrak{L}^{p(\cdot)/s, q(\cdot)/s}} = \inf \left\{ \lambda > 0 : 
\int_{0}^{\infty}  t^{\frac{q(\cdot)}{p(\cdot)} - 1} [f^{*}(t)/\lambda^{1/s}]^{q(t)} dt \leq 1 \right\} 
\]
\[
= \inf \left\{ \mu^s > 0 : \int_{0}^{\infty}  t^{\frac{q(\cdot)}{p(\cdot)} - 1} [f^{*}(t)/\mu]^{q(t)} dt \leq 1 \right\} = 
\| f \|_{\mathfrak{L}^{p(\cdot), q(\cdot)}}^{s}.
\]
Then, the lemma follows.
\end{proof}

\begin{lemma} \label{converg a cero}
Let $p(\cdot)$ and $q(\cdot) \in \mathcal{P}_{0}(0, \infty)$. If $\{ f_j \}$ is a sequence of measurable functions on 
$\mathbb{R}^n$ such that $\rho_{p(\cdot), q(\cdot)}(f_j) \to 0$, then $\| f_j \|_{\mathfrak{L}^{p(\cdot), q(\cdot)}} \to 0$.
\end{lemma}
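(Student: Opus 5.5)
The plan is to compare the modular of $f_j/\lambda$ with the modular of $f_j$ for a fixed $\lambda \in (0,1)$, using that $q(\cdot)$ is bounded. Fix $\varepsilon \in (0,1)$. Since $(f/\varepsilon)^{*}(t) = \varepsilon^{-1} f^{*}(t)$, we have, pointwise in $t$,
\[
t^{\frac{q(t)}{p(t)} - 1} [f_j^{*}(t)/\varepsilon]^{q(t)} = \varepsilon^{-q(t)}\, t^{\frac{q(t)}{p(t)} - 1} [f_j^{*}(t)]^{q(t)} \leq \varepsilon^{-q_{+}}\, t^{\frac{q(t)}{p(t)} - 1} [f_j^{*}(t)]^{q(t)},
\]
because $\varepsilon < 1$ and $q(t) \leq q_{+} < \infty$. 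Integrating over $(0,\infty)$ then gives
\[
\rho_{p(\cdot), q(\cdot)}(f_j/\varepsilon) \leq \varepsilon^{-q_{+}} \rho_{p(\cdot), q(\cdot)}(f_j).
\]

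Since $\rho_{p(\cdot), q(\cdot)}(f_j) \to 0$, there is $j_0$ with $\rho_{p(\cdot), q(\cdot)}(f_j) \leq \varepsilon^{q_{+}}$ for all $j \geq j_0$. For these $j$ the displayed bound gives $\rho_{p(\cdot), q(\cdot)}(f_j/\varepsilon) \leq 1$. By the definition of the Luxemburg quasi-norm as an infimum, this yields $\| f_j \|_{\mathfrak{L}^{p(\cdot), q(\cdot)}} \leq \varepsilon$ for $j \geq j_0$. As $\varepsilon$ is arbitrary, the conclusion follows.

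There is no real obstacle. The only point needing care is that the bound $\varepsilon^{-q(t)} \leq \varepsilon^{-q_{+}}$ uses $q_{+} < \infty$, which holds since $q(\cdot) \in \mathcal{P}_{0}(0,\infty)$. We also use the homogeneity $(\lambda f)^{*} = |\lambda| f^{*}$ recorded among the elementary properties of the rearrangement.
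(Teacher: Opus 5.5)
Your proof is correct and is essentially the paper's argument. Both rest on the bound $\rho_{p(\cdot),q(\cdot)}(f/\mu) \le \mu^{-q_{+}}\rho_{p(\cdot),q(\cdot)}(f)$ for $0<\mu\le 1$: the paper takes $\mu = \rho_{p(\cdot),q(\cdot)}(f_j)^{1/q_{+}}$, obtaining $\| f_j \|_{\mathfrak{L}^{p(\cdot),q(\cdot)}} \le \rho_{p(\cdot),q(\cdot)}(f_j)^{1/q_{+}}$, whereas you fix $\mu=\varepsilon$, which also spares you the paper's tacit assumption $\rho_{p(\cdot),q(\cdot)}(f_j)>0$.
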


\begin{proof}
Suppose that $0 < \rho_{p(\cdot), q(\cdot)}(f_j) \to 0$. Given $0 < \epsilon < 1$, for all $j$ large enough, we have 
$0 < \rho_{p(\cdot), q(\cdot)}(f_j) < \epsilon^{q_{+}}$ and
\[
\rho_{p(\cdot), q(\cdot)} \left( \rho_{p(\cdot), q(\cdot)}(f_j)^{-1/q_{+}} f_j \right)  \leq 
\rho_{p(\cdot), q(\cdot)}(f_j)^{-1} \rho_{p(\cdot), q(\cdot)}(f_j) = 1, 
\]
so $\| f_j \|_{\mathfrak{L}^{p(\cdot), q(\cdot)}} \leq \rho_{p(\cdot), q(\cdot)}(f_j)^{1/q_{+}} < \epsilon$. Then, 
$\| f_j \|_{\mathfrak{L}^{p(\cdot), q(\cdot)}} \to 0$.
\end{proof}

\begin{theorem} (H\"older's inequality) \label{Holder}
Let $p(\cdot)$ and $q(\cdot) \in \mathcal{P}_{1}(0, \infty)$. Then
\begin{equation} \label{Holder ineq}
\int_{\mathbb{R}^n} |f(x) g(x)| dx \leq \left( 1 + \frac{1}{q_{-}} - \frac{1}{q_{+}} \right)
\| f \|_{\mathfrak{L}^{p'(\cdot), q'(\cdot)}(\mathbb{R}^n)} \| g \|_{\mathfrak{L}^{p(\cdot), q(\cdot)}(\mathbb{R}^n)}.
\end{equation}
\end{theorem}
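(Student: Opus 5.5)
The plan is to pass from $\mathbb{R}^n$ to $(0,\infty)$ via rearrangements, and then to apply the classical H\"older inequality for variable Lebesgue spaces on $(0,\infty)$.

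\textbf{Step 1 (Hardy--Littlewood).} By the Hardy--Littlewood rearrangement inequality,
\[
\int_{\mathbb{R}^n} |f(x) g(x)|\, dx \leq \int_0^\infty f^{*}(t)\, g^{*}(t)\, dt .
\]

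\textbf{Step 2 (splitting the power of $t$).} Since $p(\cdot), q(\cdot) \in \mathcal{P}_1(0,\infty)$, the conjugate exponents $p'(\cdot), q'(\cdot)$ are well defined. Pointwise in $t$ they satisfy
\[
\Big(\frac{1}{p'(t)} - \frac{1}{q'(t)}\Big) + \Big(\frac{1}{p(t)} - \frac{1}{q(t)}\Big) = \Big(1 - \frac{1}{p(t)} - 1 + \frac{1}{q(t)}\Big) + \frac{1}{p(t)} - \frac{1}{q(t)} = 0 .
\]
Hence we may write
\[
f^{*}(t)\, g^{*}(t) = \Big[t^{\frac{1}{p'(t)} - \frac{1}{q'(t)}} f^{*}(t)\Big] \cdot \Big[t^{\frac{1}{p(t)} - \frac{1}{q(t)}} g^{*}(t)\Big] =: F(t)\, G(t).
\]

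\textbf{Step 3 (variable H\"older on $(0,\infty)$).} Apply H\"older's inequality in $L^{q(\cdot)}(0,\infty)$ with exponents $q(\cdot)$ and $q'(\cdot)$. Since $q_- > 1$, this requires no regularity of $q(\cdot)$ and gives
\[
\int_0^\infty F G \, dt \leq \Big(1 + \frac{1}{q_-} - \frac{1}{q_+}\Big) \|F\|_{L^{q'(\cdot)}} \|G\|_{L^{q(\cdot)}} .
\]
The constant is $\frac{1}{q_-} + \frac{1}{(q')_-}$, and $(q')_- = (q_+)'$, so $\frac{1}{(q')_-} = 1 - \frac{1}{q_+}$. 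By Definition \ref{var Lorentz}, $\|G\|_{L^{q(\cdot)}} = \|g\|_{\mathfrak{L}^{p(\cdot),q(\cdot)}}$ and $\|F\|_{L^{q'(\cdot)}} = \|f\|_{\mathfrak{L}^{p'(\cdot),q'(\cdot)}}$.

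\textbf{The variable H\"older inequality itself.} If this is not taken as known, it follows by normalizing $\|F\|_{L^{q'(\cdot)}} = \|G\|_{L^{q(\cdot)}} = 1$. The Luxemburg norm being at most $1$ gives modulars at most $1$ (by Fatou, since $q_+ < \infty$). Pointwise Young's inequality then gives
\[
FG \leq \frac{F^{q'(t)}}{q'(t)} + \frac{G^{q(t)}}{q(t)} \leq \Big(1 - \frac{1}{q_+}\Big) F^{q'(t)} + \frac{1}{q_-}\, G^{q(t)} .
\]
Integrating yields the bound $1 + \frac{1}{q_-} - \frac{1}{q_+}$. The degenerate cases are handled as usual: if either norm is $0$, the corresponding function vanishes a.e.; if either norm is infinite, there is nothing to prove.

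\textbf{Main obstacle.} The only delicate point is bookkeeping. One must check that $\mathfrak{L}^{p'(\cdot),q'(\cdot)}$ is exactly the space generated by the weight $t^{1/p' - 1/q'}$ in $L^{q'(\cdot)}$, with $q'_+ < \infty$ so that the Definition applies. The exponent computation in Step 2 confirms this, and it is precisely why the weights cancel.
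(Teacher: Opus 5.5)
Your proof is correct and is essentially the paper's argument. It applies the Hardy--Littlewood rearrangement inequality, then the pointwise Young inequality with exponents $q'(t)$, $q(t)$, then the bounds $1/q'(t)\le 1-1/q_+$ and $1/q(t)\le 1/q_-$ on normalized modulars; the only difference is that you absorb the weights first and phrase the last step as the variable H\"older inequality in $L^{q(\cdot)}(0,\infty)$, whereas the paper applies Young directly to $t^{1/p'(t)}f^*(t)\cdot t^{1/p(t)}g^*(t)$ against $dt/t$.
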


\begin{proof}
By \cite[Theorem 2.2 on p. 44]{Bennett}, it follows that
\begin{equation} \label{Hardy}
\int_{\mathbb{R}^n} |f(x) g(x)| dx \leq \int_{0}^{\infty} f^{*}(t) g^{*}(t) dt = \int_{0}^{\infty} t^{1/p'(t)} f^{*}(t) t^{1/p(t)}g^{*}(t) 
\frac{dt}{t}.
\end{equation}
Now, the Young's inequality, i.e. $a b \leq \displaystyle{\frac{a^{q'(t)}}{q'(t)} + \frac{b^{q(t)}}{q(t)}}$, gives
\begin{equation} \label{Young}
t^{1/p'(t)} f^{*}(t) t^{1/p(t)}g^{*}(t) \leq \frac{t^{q'(t)/p'(t)}}{q'(t)} [f^{*}(t)]^{q'(t)} + \frac{t^{q(t)/p(t)}}{q(t)} [g^{*}(t)]^{q(t)}.
\end{equation}
Then, from (\ref{Hardy}) and (\ref{Young}) we obtain
\[
\int_{\mathbb{R}^n} |f(x) g(x)| dx \leq \frac{\rho_{p'(\cdot), q'(\cdot)}(f)}{(q')_{-}} + \frac{\rho_{p(\cdot), q(\cdot)}(g)}{q_{-}},
\]
by considering $\rho_{p'(\cdot), q'(\cdot)}(f), \, \rho_{p(\cdot), q(\cdot)}(g) \leq 1$ and since $\frac{1}{(q')_{-}} = \frac{1}{(q_{+})'} = 1 - \frac{1}{q_{+}}$, we get
\[
\int_{\mathbb{R}^n} |f(x) g(x)| dx \leq 1 + \frac{1}{q_{-}} - \frac{1}{q_{+}},
\]
which leads to (\ref{Holder ineq}).
\end{proof}

Let
\[
f^{**}(t) := \frac{1}{t} \int_{0}^{t} f^{*}(s) ds.
\]
Given $p(\cdot)$ and $q(\cdot) \in \mathcal{P}_{0}(0, \infty)$, we define the space $\mathfrak{L}_{p(\cdot), q(\cdot)}(\mathbb{R}^n)$ as the collection of all measurable functions $f$ on $\mathbb{R}^n$ such that
\[
\| f \|_{\mathfrak{L}_{p(\cdot), q(\cdot)}} := \left\| t^{\frac{1}{p(t)} - \frac{1}{q(t)}}  f^{**}(t) \right\|_{L^{q(\cdot)}(0, \infty)} < \infty.
\]
By \cite[Theorem 2.8]{Ephre}, the couple $\left( \mathfrak{L}_{p(\cdot), q(\cdot)}(\mathbb{R}^n), \| \cdot \|_{\mathfrak{L}_{p(\cdot), q(\cdot)}} \right)$ results a Banach function space when $p(\cdot)$ and $q(\cdot) \in \mathbb{P}_{1}(0, \infty)$.

\begin{definition} \label{r-convexification}
Let $X$ be a (quasi-)Banach function space and $r \in (0, \infty)$. The $r$-convexification $X^r$ of $X$ is defined by setting 
$X^r := \{ f \in \mathfrak{M} : |f|^r \in X \}$ equipped with the quasi-norm $\| f \|_{X^r} : =  \| |f|^r \|^{1/r}_{X}$.
\end{definition}

For $X = \mathfrak{L}_{p(\cdot), q(\cdot)}(\mathbb{R}^n)$ and $s \in (0, \infty)$, we have that 
$X^{\frac{1}{s}} = \mathfrak{L}_{\frac{p(\cdot)}{s}, \frac{q(\cdot)}{s}}(\mathbb{R}^n)$ and 
$\| f \|_{X^{\frac{1}{s}}} = \| f \|_{\mathfrak{L}_{\frac{p(\cdot)}{s}, \frac{q(\cdot)}{s}}}$. 

\begin{proposition} \label{s-convex}
Let $p(\cdot)$ and $q(\cdot) \in \mathbb{P}_{0}(0, \infty)$, then for any $s \in (0, \min\{1, p_{-}, q_{-} \})$ and any sequence 
$\{ f_j \}_{j=1}^{\infty} \subset (\mathfrak{L}_{p(\cdot), q(\cdot)}(\mathbb{R}^n))^{\frac{1}{s}}$,
\[
\left\| \sum_{j=1}^{\infty} |f_j| \right\|_{(\mathfrak{L}_{p(\cdot), q(\cdot)})^{\frac{1}{s}}} \leq 
\sum_{j=1}^{\infty} \left\| f_j \right\|_{(\mathfrak{L}_{p(\cdot), q(\cdot)})^{\frac{1}{s}}}.
\]
In others words, $\mathfrak{L}_{p(\cdot), q(\cdot)}(\mathbb{R}^n)$ is strictly $s$-convex.
\end{proposition}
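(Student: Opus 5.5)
The plan is to use the identification stated just before the proposition: $(\mathfrak{L}_{p(\cdot), q(\cdot)}(\mathbb{R}^n))^{\frac{1}{s}} = \mathfrak{L}_{\frac{p(\cdot)}{s}, \frac{q(\cdot)}{s}}(\mathbb{R}^n)$ with equal quasi-norms. This reduces the claim to showing that $\| \cdot \|_{\mathfrak{L}_{\frac{p(\cdot)}{s}, \frac{q(\cdot)}{s}}}$ is countably subadditive on nonnegative functions. Since $s < \min\{1, p_{-}, q_{-}\}$, the exponents $\tilde p(\cdot) := p(\cdot)/s$ and $\tilde q(\cdot) := q(\cdot)/s$ satisfy $\tilde p_{-}, \tilde q_{-} > 1$. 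The log-type decay conditions at $0$ and $\infty$ are clearly preserved under division by $s$, so $\tilde p(\cdot), \tilde q(\cdot) \in \mathbb{P}_{1}(0,\infty)$. One could simply invoke \cite[Theorem 2.8]{Ephre}, which makes $\mathfrak{L}_{\tilde p(\cdot), \tilde q(\cdot)}$ a Banach function space, and then pass from finite to countable sums by the Fatou property. I would instead give the following direct argument, which does not use the log conditions.

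\textbf{Step 1 (subadditivity of $f^{**}$).} Since Lebesgue measure on $\mathbb{R}^n$ is nonatomic, we have the representation
\[
f^{**}(t) = \sup_{|E| = t} \frac{1}{t}\int_E |f(x)|\,dx
\]
(see \cite{Bennett}). Set $F = \sum_j |f_j|$. For any $E$ with $|E|=t$, monotone convergence gives
\[
\frac1t\int_E F = \sum_j \frac1t\int_E |f_j| \le \sum_j f_j^{**}(t),
\]
and taking the supremum over such $E$ yields $F^{**}(t) \le \sum_j f_j^{**}(t)$ for every $t>0$.

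\textbf{Step 2 (transfer to $L^{\tilde q(\cdot)}(0,\infty)$).} Multiply the pointwise inequality of Step 1 by the weight $w(t) = t^{\frac{1}{\tilde p(t)} - \frac{1}{\tilde q(t)}}$. Because the Luxemburg norm of $L^{\tilde q(\cdot)}(0,\infty)$ is a lattice norm, we get
\[
\|F\|_{\mathfrak{L}_{\tilde p(\cdot),\tilde q(\cdot)}} \le \Big\| \sum_j w f_j^{**} \Big\|_{L^{\tilde q(\cdot)}}.
\]
Since $\tilde q_{-} > 1$, the space $L^{\tilde q(\cdot)}(0,\infty)$ is a Banach function space. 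Its modular is convex, so the Luxemburg functional satisfies the triangle inequality.

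\textbf{Step 3 (countable Minkowski).} It remains to show
\[
\Big\|\sum_j g_j\Big\|_{L^{\tilde q(\cdot)}} \le \sum_j \|g_j\|_{L^{\tilde q(\cdot)}}, \qquad g_j := w f_j^{**} \ge 0.
\]
For finite sums this is the triangle inequality from Step 2. For the infinite sum, the partial sums $G_N$ increase to $G=\sum_j g_j$. By monotone convergence in the modular $\int_0^\infty (G_N/\lambda)^{\tilde q(t)}\,dt$, the Fatou property $\|G_N\| \uparrow \|G\|$ holds. Passing to the limit in the finite inequality gives the claim; if the right-hand side is infinite there is nothing to prove.

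The main obstacle I expect is Step 1 in the countable setting, together with checking that the identification $X^{1/s} = \mathfrak{L}_{p(\cdot)/s, q(\cdot)/s}$ is used with the correct norm (as stated in the paper). The supremum formula for $f^{**}$ handles the first issue cleanly. If one preferred to avoid it, the fallback is the finite subadditivity $(f+g)^{**} \le f^{**}+g^{**}$ combined with the fact that $F_N^{**} \uparrow F^{**}$ when $F_N \uparrow F$. This monotone limit follows from $F_N^{*}\uparrow F^{*}$ and monotone convergence in $\int_0^t$.
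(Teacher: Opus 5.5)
Your proof is correct, but it does the work in a different place than the paper. The paper's proof is two citations. First, Theorem 2.8 of Ephremidze--Kokilashvili--Samko makes $\mathfrak{L}_{p(\cdot)/s,\,q(\cdot)/s}(\mathbb{R}^n)$ a Banach function space; this is where $p(\cdot)/s,\,q(\cdot)/s \in \mathbb{P}_1(0,\infty)$, hence the log conditions and $s<\min\{p_-,q_-\}$, are used. Second, the Fatou property of Banach function spaces (Bennett--Sharpley, Lemma 1.5(i)) upgrades the triangle inequality to countable sums. That is exactly the shortcut you mention and set aside.

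You instead prove subadditivity at the level of rearrangements, $F^{**}\le\sum_j f_j^{**}$, using the supremum formula on the nonatomic space $\mathbb{R}^n$. You then run the finite-Minkowski-plus-Fatou argument in the weighted variable Lebesgue space $L^{\tilde q(\cdot)}(0,\infty)$ rather than in $\mathfrak{L}_{\tilde p(\cdot),\tilde q(\cdot)}$. Your route is self-contained and more general: it needs only $\tilde q_-\ge 1$, with no log conditions and no hypothesis on $p(\cdot)$. What it does not give is the rest of the Banach function space structure (e.g. $\|\chi_E\|<\infty$ for $|E|<\infty$). The cited theorem supplies that, but this proposition does not need it.

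On the obstacle you flagged, your suspicion is justified, and it affects the paper's proof equally. The identification $(\mathfrak{L}_{p(\cdot),q(\cdot)})^{1/s}=\mathfrak{L}_{p(\cdot)/s,\,q(\cdot)/s}$ with equal norms is exact for the $f^*$-spaces (Lemma~\ref{Lpqs}) but not for the $f^{**}$-spaces. By Jensen, $[(|f|^{1/s})^{**}]^{s}\ge f^{**}$, so one only gets $\|f\|_{(\mathfrak{L}_{p(\cdot),q(\cdot)})^{1/s}}\ge\|f\|_{\mathfrak{L}_{p(\cdot)/s,\,q(\cdot)/s}}$, in general strictly. In fact, if $p(\infty)\le 1$ then $\mathfrak{L}_{p(\cdot),q(\cdot)}=\{0\}$, since $f^{**}(t)\gtrsim 1/t$ at infinity for $f\neq 0$.

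What the paper uses later, through Lemma~\ref{Lpqs} and Proposition~\ref{norm Lpq}, is the countable triangle inequality for $\mathfrak{L}_{p(\cdot)/s,\,q(\cdot)/s}$ itself, and both arguments establish that. If you want the statement with the genuine convexification norm, your method adapts with one change in Step 1. For $|E|=t$, Minkowski in $L^{1/s}(E)$ together with Hardy--Littlewood gives $\bigl(\frac1t\int_E F^{1/s}\bigr)^{s}\le\sum_j[(|f_j|^{1/s})^{**}(t)]^{s}$. Taking the supremum over $E$ yields $[(F^{1/s})^{**}]^{s}\le\sum_j[(|f_j|^{1/s})^{**}]^{s}$. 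Step 3 then runs in $L^{q(\cdot)/s}(0,\infty)$ with weight $t^{s/p(t)-s/q(t)}$, which is normed because $q_-/s>1$.
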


\begin{proof}
Follows from that $(\mathfrak{L}_{p(\cdot), q(\cdot)}(\mathbb{R}^n))^{\frac{1}{s}} = 
\mathfrak{L}_{\frac{p(\cdot)}{s}, \frac{q(\cdot)}{s}}(\mathbb{R}^n)$ is a Banach function space and 
\cite[Lemma 1.5 (i) on p. 4]{Bennett}.
\end{proof}

\begin{proposition} \label{norm Lpq}
Let $p(\cdot)$ and $q(\cdot) \in \mathbb{P}_{1}(0, \infty)$, then $\mathfrak{L}^{p(\cdot), q(\cdot)}(\mathbb{R}^n) = 
\mathfrak{L}_{p(\cdot), q(\cdot)}(\mathbb{R}^n)$, and there exists a constant $C > 1$ such that for any 
$f \in \mathfrak{L}^{p(\cdot), q(\cdot)}(\mathbb{R}^n)$
\[
\| f \|_{\mathfrak{L}^{p(\cdot), q(\cdot)}} \leq \| f \|_{\mathfrak{L}_{p(\cdot), q(\cdot)}} \leq C \| f \|_{\mathfrak{L}^{p(\cdot), q(\cdot)}}.
\]
\end{proposition}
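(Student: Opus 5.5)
The left inequality is immediate. Since $f^{*}$ is non-increasing, $f^{*}(t) \leq f^{**}(t)$ for all $t>0$. The map $g \mapsto \|g\|_{L^{q(\cdot)}(0,\infty)}$ is a lattice norm, so
\[
\left\| t^{\frac{1}{p(t)}-\frac{1}{q(t)}} f^{*}(t)\right\|_{L^{q(\cdot)}(0,\infty)} \leq \left\| t^{\frac{1}{p(t)}-\frac{1}{q(t)}} f^{**}(t)\right\|_{L^{q(\cdot)}(0,\infty)}.
\]
This gives $\| f \|_{\mathfrak{L}^{p(\cdot), q(\cdot)}} \leq \| f \|_{\mathfrak{L}_{p(\cdot), q(\cdot)}}$ and the inclusion $\mathfrak{L}_{p(\cdot),q(\cdot)} \subset \mathfrak{L}^{p(\cdot),q(\cdot)}$.

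For the right inequality, write $\varphi = f^{*}$, so that $f^{**} = H\varphi$ with $H$ the Hardy averaging operator $H\varphi(t) = \frac1t\int_0^t \varphi(s)\,ds$. The claim then reduces to a weighted Hardy inequality in variable Lebesgue space on $(0,\infty)$:
\[
\| w(t) H\varphi(t)\|_{L^{q(\cdot)}(0,\infty)} \leq C \| w(t)\varphi(t)\|_{L^{q(\cdot)}(0,\infty)}, \qquad w(t)=t^{\frac1{p(t)}-\frac1{q(t)}},
\]
valid for non-negative non-increasing $\varphi$ (or even all $\varphi\ge0$). I would prove this modularly, with the normalization $\|w\varphi\|_{L^{q(\cdot)}}\le 1$, by showing $\rho(H\varphi)\le C'$; homogeneity and $q_+<\infty$ then convert this to a norm bound (the same trick as in Lemma \ref{converg a cero}). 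By Remark \ref{equivalente}, the modular is comparable to the sum of two pieces: on $(0,1/2)$ with frozen weight $t^{q(0)/p(0)-1}$, and on $(1/2,\infty)$ with frozen weight $t^{q(\infty)/p(\infty)-1}$. The exponent $q(t)$ on $[\varphi]^{q(t)}$ still varies, but only log-Hölder continuously at $0$ and at $\infty$.

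On each piece I would run the standard variable-exponent argument, which has three steps.
\begin{enumerate}
\item For $t<1/2$, split $H\varphi(t)$ using Hölder/Jensen with the exponent $q(0)$ and the classical Hardy inequality with the power weight $t^{q(0)/p(0)-1}$. This inequality holds exactly because $p(0)>1$, and $q(0)\ge 1$ as well.
\item Pass between $[H\varphi(t)]^{q(t)}$ and $[H\varphi(t)]^{q(0)}$ using the log condition $|q(t)-q(0)|\le C/(-\log t)$. This gives $t^{\pm|q(t)-q(0)|}\approx 1$, and the normalization forces $\varphi$ to be controlled by a power of $t$ near $0$, so the exponent change costs only a bounded factor plus an integrable error term, as in Diening's proofs for $L^{p(\cdot)}$.
\item Treat $(1/2,\infty)$ analogously with the decay condition at infinity, splitting $H\varphi(t)=\frac1t\int_0^{1/2}\varphi+\frac1t\int_{1/2}^t\varphi$. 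The first term is bounded by a constant times $1/t$, and since $p(\infty)>1$ the function $t^{q(\infty)/p(\infty)-1-q(\infty)}$ is integrable at infinity.
\end{enumerate}

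Alternatively, and more cheaply, one can invoke \cite{Ephre}, where boundedness of the Hardy operator on these weighted variable spaces under the $\mathbb{P}_1$ hypotheses is essentially the content behind \cite[Theorem 2.8]{Ephre}. If that result is available, I would simply cite it and apply it to $\varphi=f^*$, which yields $\| f \|_{\mathfrak{L}_{p(\cdot), q(\cdot)}} \leq C \| f \|_{\mathfrak{L}^{p(\cdot), q(\cdot)}}$ and hence the equality of the spaces. The constant satisfies $C>1$ because $f^{**}\ge f^*$ and $H$ is not an isometry.

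The main obstacle is the exponent-freezing step near $0$ and near $\infty$: handling $[H\varphi(t)]^{q(t)}$ versus $[H\varphi(t)]^{q(0)}$ when $H\varphi$ may be large. I would try first to use monotonicity of $\varphi$ together with the normalization to get the pointwise bound $\varphi(t)\lesssim t^{-1/p(0)}$ near $0$. The log condition then gives $\varphi(t)^{|q(t)-q(0)|}\lesssim 1$.
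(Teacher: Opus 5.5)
Your first inequality is the paper's argument: $f^{*}\le f^{**}$ together with monotonicity of the $L^{q(\cdot)}$ norm. For the second inequality the paper only cites \cite[Theorem 2.4]{Ephre}, which gives the estimate $\|f\|_{\mathfrak{L}_{p(\cdot),q(\cdot)}}\le C\|f\|_{\mathfrak{L}^{p(\cdot),q(\cdot)}}$ directly. Your fallback is therefore the paper's proof, except that the result to invoke is Theorem~2.4 and not Theorem~2.8. The paper uses Theorem~2.8 for a different fact, namely that $\mathfrak{L}_{p(\cdot),q(\cdot)}$ is a Banach function space.

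Your main plan re-proves this weighted Hardy estimate by freezing exponents, and the outline is correct. It does need the monotonicity of $\varphi=f^{*}$, because your pointwise bound depends on it, so the ``even all $\varphi\ge 0$'' aside should be dropped. The details you left implicit go through as follows.
\begin{itemize}
\item Near $0$: if the modular is at most $1$, then for small $t$ monotonicity and the two log conditions give $\varphi(t)^{q(0)-C/\log(1/t)}\,t^{q(0)/p(0)}\lesssim 1$. Hence $\varphi(t)\lesssim t^{-1/p(0)}$, as you hoped.
\item From $\varphi$ to $H\varphi$: since $p(0)>1$, this bound integrates to $H\varphi(t)\lesssim t^{-1/p(0)}$. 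That is the bound you actually need to freeze $[H\varphi(t)]^{q(t)}$ on the left-hand side; you only stated it for $\varphi$.
\item The same bound gives $\int_0^{1/2}\varphi\lesssim 1$, which your step~3 uses without comment.
\item On $(1/2,\infty)$: $\frac1t\int_{1/2}^t\varphi\le\varphi(1/2)\lesssim 1$. So only the bounded-value trick $a^{q(t)}\lesssim a^{q(\infty)}+(e+t)^{-m}$ for $0\le a\lesssim 1$, and its reverse, is needed there.
\end{itemize}

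The citation costs one line. Your route is self-contained and shows exactly where $p(0)>1$, $p(\infty)>1$ and the two log conditions enter, at the price of a page of routine estimates. Finally, no justification is needed for $C>1$, since any admissible constant can be enlarged.
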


\begin{proof}
The first inequality follows from that $f^{*} \leq f^{**}$, and the second one from \cite[Theorem 2.4]{Ephre}.
\end{proof}

From the definition of the space $\mathfrak{L}^{p(\cdot), q(\cdot)}(\mathbb{R}^n)$ and the subsequent results, one can check the following properties.

\begin{theorem} \label{Lpq quasi}
Let $p(\cdot)$ and $q(\cdot) \in \mathbb{P}_{0}(0, \infty)$, then the space $\mathfrak{L}^{p(\cdot), q(\cdot)}(\mathbb{R}^n)$ together with the Luxemburg norm $\| \cdot \|_{\mathfrak{L}^{p(\cdot), q(\cdot)}}$ results a quasi-Banach function space. That is,

$(i)$ $\| f \|_{\mathfrak{L}^{p(\cdot), q(\cdot)}} = 0$ implies that $f = 0$ a.e.;

$(ii)$ $\| \lambda f \|_{\mathfrak{L}^{p(\cdot), q(\cdot)}} = |\lambda| \| f \|_{\mathfrak{L}^{p(\cdot), q(\cdot)}}$ for all 
$\lambda \in \mathbb{C}$ and all $f \in \mathfrak{L}^{p(\cdot), q(\cdot)}(\mathbb{R}^n)$;

$(iii)$ there exists $C \geq 1$ such that $\| f+g \|_{\mathfrak{L}^{p(\cdot), q(\cdot)}} \leq C (\| f \|_{\mathfrak{L}^{p(\cdot), q(\cdot)}} + \| g \|_{\mathfrak{L}^{p(\cdot), q(\cdot)}})$ for all $f, g \in \mathfrak{L}^{p(\cdot), q(\cdot)}(\mathbb{R}^n)$;

$(iv)$ if $f$ is a measurable function and $g \in \mathfrak{L}^{p(\cdot), q(\cdot)}(\mathbb{R}^n)$ are such that $|f| \leq |g|$ a.e., 
then $f \in \mathfrak{L}^{p(\cdot), q(\cdot)}(\mathbb{R}^n)$ and 
$\| f \|_{\mathfrak{L}^{p(\cdot), q(\cdot)}} \leq \| g \|_{\mathfrak{L}^{p(\cdot), q(\cdot)}}$ (in particular 
$\| f \|_{\mathfrak{L}^{p(\cdot), q(\cdot)}} = \| |f| \|_{\mathfrak{L}^{p(\cdot), q(\cdot)}}$);

$(v)$ if $0 \leq f_n \uparrow f$ a.e., then either $f \notin \mathfrak{L}^{p(\cdot), q(\cdot)}(\mathbb{R}^n)$ and 
$\| f_n \|_{\mathfrak{L}^{p(\cdot), q(\cdot)}} \uparrow \infty$, or $f \in \mathfrak{L}^{p(\cdot), q(\cdot)}(\mathbb{R}^n)$ and 
$\| f_n \|_{\mathfrak{L}^{p(\cdot), q(\cdot)}} \uparrow \| f \|_{\mathfrak{L}^{p(\cdot), q(\cdot)}}$;

$(vi)$ $\| \chi_E \|_{\mathfrak{L}^{p(\cdot), q(\cdot)}} < \infty$ for all measurable set $E$ of $\mathbb{R}^n$ with $|E| < \infty$.
\end{theorem}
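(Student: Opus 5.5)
The plan is to check the six properties one by one. Each reduces to an elementary property of $f^{*}$ or of the modular $\rho_{p(\cdot), q(\cdot)}$. The only part needing real work is $(iii)$, which I will get from Lemma \ref{Lpqs} together with Propositions \ref{s-convex} and \ref{norm Lpq}.

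\textbf{The easy properties.} For $(i)$: if $\|f\|_{\mathfrak{L}^{p(\cdot), q(\cdot)}} = 0$, then $\rho_{p(\cdot), q(\cdot)}(f/\lambda) \leq 1$ for every $\lambda>0$. Since $q_{-} > 0$, each $t \mapsto t^{q(t)/p(t)-1}[f^{*}(t)/\lambda]^{q(t)}$ is pointwise at least $\lambda^{-q_{-}}$ times a fixed positive quantity wherever $f^{*}(t)>0$ and $f^{*}(t)\le\lambda$. Letting $\lambda \to 0$ then forces $f^{*}=0$ a.e. on $(0,\infty)$. The identity $|\{f^{*}>s\}| = |\{|f|>s\}|$ gives $f=0$ a.e.

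For $(ii)$, use $(\lambda f)^{*} = |\lambda| f^{*}$ and the definition of the Luxemburg norm. For $(iv)$, $|f|\le|g|$ gives $f^{*}\le g^{*}$, so the modular and hence the norm are monotone.

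For $(v)$: if $0\le f_n\uparrow f$, then $f_n^{*}\uparrow f^{*}$ pointwise; this is the standard fact in Bennett–Sharpley, since the distribution functions increase to that of $f$. By monotone convergence, $\rho(f_n/\lambda)\uparrow\rho(f/\lambda)$ for every $\lambda$, and the usual Luxemburg argument then gives $\|f_n\|\uparrow\|f\|$, including the value $\infty$.

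For $(vi)$, $\chi_E^{*} = \chi_{(0,|E|)}$, so $\rho(\chi_E/\lambda)=\int_0^{|E|} t^{q(t)/p(t)-1}\lambda^{-q(t)}\,dt$. By Remark \ref{equivalente}, near $0$ this integral is comparable to $\int_0^{1/2} t^{q(0)/p(0)-1}\,dt<\infty$, and the remaining range $[1/2,|E|]$ is bounded. So the integral is finite and tends to $0$ as $\lambda\to\infty$, hence the norm is finite.

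\textbf{The quasi-triangle inequality $(iii)$.} This is the main obstacle, since $\|\cdot\|_{\mathfrak{L}^{p(\cdot), q(\cdot)}}$ is not a norm and the Banach-space results require exponents in $\mathbb{P}_{1}$.
\begin{itemize}
\item Choose $s$ small, say $s<\min\{1,p_{-},q_{-}\}/2$, so that $p(\cdot)/s$ and $q(\cdot)/s$ lie in $\mathbb{P}_{1}(0,\infty)$. The log-decay conditions are preserved under division by $s$.
\item By Lemma \ref{Lpqs} and Proposition \ref{norm Lpq},
\[
\|f+g\|^{s}_{\mathfrak{L}^{p(\cdot), q(\cdot)}} = \| |f+g|^{s}\|_{\mathfrak{L}^{p(\cdot)/s, q(\cdot)/s}} \le \| |f|^{s}+|g|^{s}\|_{\mathfrak{L}_{p(\cdot)/s, q(\cdot)/s}}.
\]
This uses $|f+g|^{s}\le|f|^{s}+|g|^{s}$ (as $s<1$) and the monotonicity from $(iv)$.
\item Since $\mathfrak{L}_{p(\cdot)/s, q(\cdot)/s}$ is a Banach function space (equivalently, by Proposition \ref{s-convex}), the right-hand side is at most
\[
\| |f|^{s}\|_{\mathfrak{L}_{p(\cdot)/s, q(\cdot)/s}}+\| |g|^{s}\|_{\mathfrak{L}_{p(\cdot)/s, q(\cdot)/s}} \le C\bigl(\|f\|^{s}_{\mathfrak{L}^{p(\cdot), q(\cdot)}}+\|g\|^{s}_{\mathfrak{L}^{p(\cdot), q(\cdot)}}\bigr),
\]
again by Proposition \ref{norm Lpq} and Lemma \ref{Lpqs}.
\item Take $s$-th roots and use $(a+b)^{1/s}\le 2^{1/s-1}(a^{1/s}+b^{1/s})$. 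This yields $(iii)$ with constant $C' = (2C)^{1/s}$.
\end{itemize}

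A more elementary alternative for $(iii)$ would be to use $(f+g)^{*}(t)\le f^{*}(t/2)+g^{*}(t/2)$ and bound the dilated modular via Remark \ref{equivalente}. The route above avoids that computation.

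\textbf{Completeness.} This is not listed among $(i)$–$(vi)$, but it follows in the standard way from $(iii)$ and $(v)$ via the Riesz–Fischer argument. I would only remark on it.
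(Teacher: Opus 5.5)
Your proposal is correct and is essentially the argument the paper intends. The paper writes out no proof; it says only that (i)--(vi) follow from the definition and the preceding results, and for (iii) this is your chain through Lemma \ref{Lpqs}, Proposition \ref{norm Lpq} for the exponents $p(\cdot)/s, q(\cdot)/s \in \mathbb{P}_{1}(0,\infty)$, and the triangle inequality in the Banach function space $\mathfrak{L}_{p(\cdot)/s, q(\cdot)/s}(\mathbb{R}^n)$.

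One slip to fix in (i): the restriction ``$f^{*}(t)\le\lambda$'' makes your lower bound useless as $\lambda\to 0$, because that set shrinks to $\{f^{*}=0\}$. Replace it by $\lambda<1$: then $\lambda^{-q(t)}\ge\lambda^{-q_{-}}$ for every $t$, and $\rho_{p(\cdot), q(\cdot)}(f/\lambda)\le 1$ gives $\rho_{p(\cdot), q(\cdot)}(f)\le\lambda^{q_{-}}\to 0$.
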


In particular, the space $\mathfrak{L}^{p(\cdot), q(\cdot)}(\mathbb{R}^n)$ is a ball quasi-Banach function space (see \cite[Definition 2.2]{Sawano}).

The following result shows that the quasi-norm $\| \cdot \|_{\mathfrak{L}^{p(\cdot), q(\cdot)}}$ is absolutely continuous when
$q(0) \leq p(0)$ and $p(\infty) \leq q(\infty)$.

\begin{proposition} \label{abs cont}
Let $p(\cdot)$ and $q(\cdot) \in \mathbb{P}_{0}(0, \infty)$ such that $q(0) \leq p(0)$ and $p(\infty) \leq q(\infty)$. Then, for 
every $f \in \mathfrak{L}^{p(\cdot), q(\cdot)}(\mathbb{R}^n)$, 
\[
\| f \chi_{E_j} \|_{\mathfrak{L}^{p(\cdot), q(\cdot)}} \downarrow 0
\]
whenever $\{ E_j \}_{j=1}^{\infty}$ is a sequence of measurable 
sets of $\mathbb{R}^n$ that satisfies $E_j \supset E_{j+1}$ for all $j \in \mathbb{N}$ and $\bigcap_{j=1}^{\infty} E_j = \emptyset$.
\end{proposition}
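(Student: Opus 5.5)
By Lemma \ref{converg a cero}, it suffices to show that $\rho_{p(\cdot),q(\cdot)}(f\chi_{E_j}) \to 0$. Monotonicity of $\|f\chi_{E_j}\|$ in $j$ is immediate from Theorem \ref{Lpq quasi}(iv). Since $|f\chi_{E_j}| \leq |f|$, we have $(f\chi_{E_j})^*(t) \leq f^*(t)$ for all $t$. Hence the integrand
\[
t^{\frac{q(t)}{p(t)}-1}[(f\chi_{E_j})^*(t)]^{q(t)}
\]
is dominated by $t^{\frac{q(t)}{p(t)}-1}[f^*(t)]^{q(t)}$. This majorant is integrable: for $f \in \mathfrak{L}^{p(\cdot),q(\cdot)}$ we have $\rho(f/\lambda)\le 1$ for some $\lambda$, and since $q_+<\infty$ this gives $\rho(f)\le \max(\lambda^{q_+},\lambda^{q_-})<\infty$. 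By dominated convergence, it then suffices to prove $(f\chi_{E_j})^*(t) \to 0$ for every (or a.e.) $t\in(0,\infty)$.

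The pointwise convergence is the main step. Fix $t>0$ and $s>0$. Then
\[
|\{|f\chi_{E_j}|>s\}| = |\{|f|>s\}\cap E_j|.
\]
If $|\{|f|>s\}|<\infty$, continuity of measure from above (the sets decrease to $\emptyset$) gives $|\{|f|>s\}\cap E_j| \to 0$, so for large $j$ this is $\le t$, i.e.\ $(f\chi_{E_j})^*(t)\le s$. Letting $s\downarrow 0$ yields $(f\chi_{E_j})^*(t)\to 0$.

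So the real obstacle is ensuring that $|\{|f|>s\}|<\infty$ for every $s>0$, equivalently $f^*(t)\to 0$ as $t\to\infty$. This is where the hypotheses enter. By Remark \ref{equivalente}, finiteness of $\rho(f)$ implies
\[
\int_{1/2}^\infty t^{\frac{q(\infty)}{p(\infty)}-1}[f^*(t)]^{q(t)}\,dt<\infty.
\]
Suppose $f^*(t)\ge c>0$ for all $t$ (recall $f^*$ is nonincreasing). Then $[f^*(t)]^{q(t)} \ge \min(c^{q_-},c^{q_+})>0$, and the integral would dominate $\int_{1/2}^\infty t^{q(\infty)/p(\infty)-1}\,dt$. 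Since $p(\infty)\le q(\infty)$, the exponent satisfies $q(\infty)/p(\infty)-1\ge 0>-1$, so this integral diverges, a contradiction. Hence $f^*(\infty)=0$ and the argument above applies.

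The condition $q(0)\le p(0)$ seems unnecessary for this route, since $f^*(t)$ is finite for each $t>0$ and no behaviour near $0$ enters the dominated convergence. I would only use it, if at all, to guarantee that $f^*(t)<\infty$ for $t>0$ under the Remark's equivalence near the origin; otherwise I would simply note that it is not needed. Finally, combining the pointwise convergence with dominated convergence gives $\rho(f\chi_{E_j})\to0$, and Lemma \ref{converg a cero} concludes.
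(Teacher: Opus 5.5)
Your proof is correct and follows the same route as the paper: you reduce to the modular via Lemma \ref{converg a cero}, apply dominated convergence with majorant $t^{q(t)/p(t)-1}[f^{*}(t)]^{q(t)}$, and get pointwise decay of $(f\chi_{E_j})^{*}$ from the finiteness of the level sets $\{|f|>s\}$ together with continuity of measure from above. The one difference is how you obtain that finiteness: you use the monotonicity of $f^{*}$, whereas the paper uses a weighted Chebyshev argument that needs both $q(0)\le p(0)$ and $p(\infty)\le q(\infty)$. You are right that $q(0)\le p(0)$ is superfluous, and in fact so is $p(\infty)\le q(\infty)$, since $q(\infty)/p(\infty)-1>-1$ always makes $\int_{1/2}^{\infty}t^{q(\infty)/p(\infty)-1}\,dt$ diverge.
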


\begin{proof}
Given $f \in \mathfrak{L}^{p(\cdot), q(\cdot)}(\mathbb{R}^n)$, by Remark \ref{equivalente} and since $q(0) \leq p(0)$ and 
$p(\infty) \leq q(\infty)$, to apply conveniently Chebyshev's inequality we have that
\[
|\{ t \in (0, \infty) : f^{*}(t) > s \}| < \infty, \,\,\,\, \text{for all} \,\, s > 0.
\]
As $|\{ t \in (0, \infty) : f^{*}(t) > s \}| = |\{ x \in \mathbb{R}^n : |f(x)| > s \}|$,
\[
|\{ x \in \mathbb{R}^n : |f(x)| > s \}| < \infty, \,\,\,\, \text{for all} \,\, s > 0.
\]
So, for any fixed $s > 0$
\[
|\{ x \in \mathbb{R}^n : |(f \chi_{E_j})(x)| > s \} = |\{ x \in \mathbb{R}^n : |f(x)| > s \} \cap E_j | \downarrow 0.
\]
Then, for every $n \in \mathbb{N}$, there exists $j_0(n)$ such that
\[
|\{ x \in \mathbb{R}^n : |(f \chi_{E_j})(x)| > 2^{-n} \} \leq 2^{-n}, \,\,\,\, \text{for all} \,\, j \geq j_0(n),
\]
which leads to $(f \chi_{E_j})^{*}(t) \leq 2^{-n}$, for all $t \geq 2^{-n}$ and $j \geq j_0(n)$. Thus, for any $t > 0$, 
$(f \chi_{E_j})^{*}(t) \downarrow 0$. Since $(f \chi_{E_j})^{*} \leq f^{*}$ and $f \in \mathfrak{L}^{p(\cdot), q(\cdot)}$, by 
dominated convergence Theorem, we obtain $\rho_{p(\cdot), q(\cdot)}(f \chi_{Ej}) \downarrow 0$. Finally, the proposition follows from 
Lemma \ref{converg a cero}.
\end{proof}

\begin{remark}
We observe that the notion of absolutely continuous (quasi-)norm presented in \cite[Proposition 3.1]{Almeida} is not correct. There, they assume $|E_1| < \infty$, which is not required in the definition of absolutely continuous (quasi-)norm (see e.g. \cite[Definition 3.2]{Wang}). For instance, \cite[Proposition 3.1]{Almeida} does not cover the case $E_j = \{ x \in \mathbb{R}^n : |x| > j \}$, where $j \in \mathbb{N}$.
\end{remark}

We point out that \cite[Theorem 3.11]{Bennett} also applies to quasi-Banach function spaces. So, from Proposition \ref{abs cont}, it obtains the following result.

\begin{corollary} \label{simple funct}
Let $p(\cdot)$ and $q(\cdot) \in \mathbb{P}_{0}(0, \infty)$ such that $q(0) \leq p(0)$ and $p(\infty) \leq q(\infty)$. Then, the collection of all simple functions supported in sets of finite measure is dense in $\mathfrak{L}^{p(\cdot), q(\cdot)}(\mathbb{R}^n)$.
\end{corollary}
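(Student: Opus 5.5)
The plan is to follow the standard Banach function space argument behind \cite[Theorem 3.11]{Bennett}, adapted to the quasi-norm. It uses only the lattice properties of Theorem \ref{Lpq quasi} and the absolute continuity of Proposition \ref{abs cont}.

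Fix $f \in \mathfrak{L}^{p(\cdot), q(\cdot)}(\mathbb{R}^n)$. Since $\|f\|_{\mathfrak{L}^{p(\cdot), q(\cdot)}} = \||f|\|_{\mathfrak{L}^{p(\cdot), q(\cdot)}}$, I may split $f$ into real and imaginary parts and then into positive and negative parts. Using the quasi-triangle inequality (iii) a fixed number of times, it suffices to treat $f \geq 0$. The proof of Proposition \ref{abs cont} already shows that
\[
|\{x : f(x) > s\}| < \infty \quad \text{for every } s > 0,
\]
and in particular $f$ is finite a.e.

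Choose the usual simple functions $0 \leq s_k \uparrow f$, for instance
\[
s_k = \sum_{i=1}^{k2^k} (i-1)2^{-k}\chi_{\{(i-1)2^{-k} < f \leq i2^{-k}\}} + k\chi_{\{f>k\}} ,
\]
with the first term (value $0$) removed. Then each $s_k$ vanishes on $\{f \leq 2^{-k}\}$, so it is supported in $\{f > 2^{-k}\}$, which has finite measure. Thus each $s_k$ is a simple function supported in a set of finite measure. Moreover $0 \le f - s_k \le f$ and $f - s_k \to 0$ a.e.

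The main step is to deduce $\|f - s_k\|_{\mathfrak{L}^{p(\cdot), q(\cdot)}} \to 0$ from absolute continuity. This is a dominated convergence statement for the quasi-norm, and I see two ways to get it.

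\textbf{Via Proposition \ref{abs cont}.} Given $\epsilon>0$, set $E_m = \{x : \sup_{k\ge m}(f-s_k)(x) > \epsilon f(x)\}$, intersected with $\{f>0\}$. These sets decrease to a null set, which I may discard so that the intersection is empty. On $E_m^c$ we have $f - s_k \le \epsilon f$ for $k \ge m$, while on $E_m$ we have $f - s_k \le f$. Hence (iii) and (iv) give
\[
\|f-s_k\| \le C\bigl(\epsilon\|f\| + \|f\chi_{E_m}\|\bigr),
\]
and the second term tends to $0$ by Proposition \ref{abs cont}.

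\textbf{Directly, bypassing the sets $E_m$.} Repeat the argument in the proof of Proposition \ref{abs cont} with $g_k = f - s_k$. For every $s>0$, $|\{g_k > s\}| \le |\{f>s\}\cap\{g_k>s\}| \to 0$ by dominated convergence of sets of finite measure. Hence $g_k^* \to 0$ pointwise, and $g_k^* \le f^*$. Dominated convergence in $\rho_{p(\cdot),q(\cdot)}$ then gives $\rho_{p(\cdot),q(\cdot)}(g_k) \to 0$, and Lemma \ref{converg a cero} yields $\|g_k\|_{\mathfrak{L}^{p(\cdot), q(\cdot)}} \to 0$.

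I would present the direct route, since it is self-contained. The only delicate point is the pointwise convergence $g_k^*(t) \to 0$. As in the proof of Proposition \ref{abs cont}, it follows from the distribution functions tending to zero for every level $s>0$, which in turn uses that $\{f > s\}$ has finite measure. That finiteness is exactly where the hypotheses $q(0) \leq p(0)$ and $p(\infty) \leq q(\infty)$ enter.
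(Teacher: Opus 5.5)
Your proof is correct. Your direct route differs from the paper's, and two small points below need fixing.

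\textbf{Comparison with the paper.} The paper's proof is a one-line appeal. Proposition \ref{abs cont} gives absolute continuity of the quasi-norm. The author then asserts, without proof, that Theorem 3.11 of Bennett--Sharpley (functions with absolutely continuous norm lie in the closure of the simple functions) carries over to quasi-Banach function spaces. Your first route, with the sets $E_m$ and the bound $\|f-s_k\| \le C(\epsilon\|f\| + \|f\chi_{E_m}\|)$, is exactly that extension written out; the quasi-triangle constant causes no trouble. The route you prefer does not use absolute continuity as a black box. It reruns the rearrangement argument from the proof of Proposition \ref{abs cont} directly on $g_k=f-s_k$:
\begin{itemize}
\item the distribution functions tend to $0$, so $g_k^*\to 0$ pointwise under the majorant $f^*$;
\item dominated convergence then gives $\rho_{p(\cdot),q(\cdot)}(g_k)\to 0$;
\item Lemma \ref{converg a cero} then gives $\|g_k\|_{\mathfrak{L}^{p(\cdot),q(\cdot)}}\to 0$.
\end{itemize}
The paper's route is shorter and works in any quasi-Banach function space with absolutely continuous quasi-norm, but it leaves the quasi-normed version of Bennett--Sharpley unchecked. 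Yours is self-contained and uses the rearrangement-invariant modular structure of this particular space.

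\textbf{First correction.} ``In particular $f$ is finite a.e.'' does not follow from $|\{f>s\}|<\infty$ for all $s>0$; think of $\infty\cdot\chi_{[0,1]}$. The correct reason is that $\rho_{p(\cdot),q(\cdot)}(f/\lambda)\le 1$ forces $f^*<\infty$ on $(0,\infty)$. Indeed, $|\{f=\infty\}|=m>0$ would give $f^*\equiv\infty$ on $(0,m)$.

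\textbf{Second correction.} The finiteness of $|\{f>s\}|$ does not actually depend on $q(0)\le p(0)$ and $p(\infty)\le q(\infty)$. Since $f^*$ is non-increasing, $f^*\ge s>0$ on all of $(0,\infty)$ would give $\rho_{p(\cdot),q(\cdot)}(f/\lambda)\ge c\int_1^\infty t^{q_-/p_+-1}\,dt=\infty$, with $c=\min\{(s/\lambda)^{q_-},(s/\lambda)^{q_+}\}$. So these hypotheses enter only through the paper's Chebyshev shortcut, not intrinsically. Your direct argument in fact proves the density for all $p(\cdot),q(\cdot)\in\mathbb{P}_0(0,\infty)$.
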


Let $p(\cdot)$ and $q(\cdot) \in \mathcal{P}_{1}(0, \infty)$, the associate space (or K\"othe dual) 
$(\mathfrak{L}^{p(\cdot), q(\cdot)}(\mathbb{R}^n))'$ is defined by
\[
(\mathfrak{L}^{p(\cdot), q(\cdot)}(\mathbb{R}^n))' := \left\{ f \in \mathfrak{M}(\mathbb{R}^n) : 
\| f \|_{(\mathfrak{L}^{p(\cdot), q(\cdot)}(\mathbb{R}^n))'} < \infty \right\},
\]
where 
\[
\| f \|_{(\mathfrak{L}^{p(\cdot), q(\cdot)}(\mathbb{R}^n))'} := \sup \left\{ \| f g\|_{L^{1}(\mathbb{R}^n)} : 
\| g \|_{\mathfrak{L}^{p(\cdot), q(\cdot)}(\mathbb{R}^n)} \leq 1 \right\}.
\]

We also introduce the following auxiliary space which will be crucial to compute explicitly the K\"othe dual of $\mathfrak{L}^{p(\cdot), q(\cdot)}(\mathbb{R}^n)$.

\begin{definition}
Let $p(\cdot)$ and $q(\cdot) \in \mathcal{P}_{1}(0, \infty)$. By $L^{p(\cdot), q(\cdot)}(0, \infty)$  we denote the space of all measurable functions $h$ on $(0, \infty)$ such that
\[
t^{\frac{1}{p(t)} - \frac{1}{q(t)}} h(t) \in L^{q(\cdot)}(0, \infty),
\]
that is 
\[
\mu_{p(\cdot), q(\cdot)}(h):= \int_{0}^{\infty} t^{\frac{q(\cdot)}{p(\cdot)} - 1} |h(t)|^{q(t)} dt < \infty,
\]
and, for every $h \in L^{p(\cdot), q(\cdot)}(0, \infty)$, we consider the Luxemburg norm
\[
\| h \|_{L^{p(\cdot), q(\cdot)}(0, \infty)} := \inf \left\{ \lambda > 0 : \mu_{p(\cdot), q(\cdot)}(h/\lambda) \leq 1 \right\} 
= \left\|  t^{\frac{1}{p(t)} - \frac{1}{q(t)}} h(t) \right\|_{L^{q(\cdot)}(0, \infty)}.
\] 
\end{definition}

\begin{remark} \label{Luxemburg rep}
Let $p(\cdot)$ and $q(\cdot) \in \mathcal{P}_{1}(0, \infty)$. It is clear that the space $L^{p(\cdot), q(\cdot)}(0, \infty)$, together 
with the norm $\| \cdot \|_{L^{p(\cdot), q(\cdot)}(0, \infty)}$, is a Banach function space. Moreover, for every 
$f \in \mathfrak{L}^{p(\cdot), q(\cdot)}(\mathbb{R}^n)$, we have that 
$\| f \|_{\mathfrak{L}^{p(\cdot), q(\cdot)}(\mathbb{R}^n)} = \| f^{*} \|_{L^{p(\cdot), q(\cdot)}(0, \infty)}$. On the other hand, by 
\cite[Proposition 2.30]{Musilova}, we have for every $g \in (\mathfrak{L}^{p(\cdot), q(\cdot)}(\mathbb{R}^n))'$ that 
$\| g \|_{(\mathfrak{L}^{p(\cdot), q(\cdot)}(\mathbb{R}^n))'} \leq \| g^{*} \|_{(L^{p(\cdot), q(\cdot)}(0, \infty))'}$. For the reverse inequality, once again taking into account \cite[Proposition 2.30]{Musilova}, we proceed as in the proof of the second statement in 
\cite[Theorem 4.10]{Bennett} and obtain $\| g^{*} \|_{(L^{p(\cdot), q(\cdot)}(0, \infty))'} \leq \| g \|_{(\mathfrak{L}^{p(\cdot), q(\cdot)}(\mathbb{R}^n))'}$. Thus, $\| g \|_{(\mathfrak{L}^{p(\cdot), q(\cdot)}(\mathbb{R}^n))'} = 
\| g^{*} \|_{(L^{p(\cdot), q(\cdot)}(0, \infty))'}$, for all $g \in (\mathfrak{L}^{p(\cdot), q(\cdot)}(\mathbb{R}^n))'$.
\end{remark}

\begin{theorem} \label{dual}
Let $p(\cdot)$ and $q(\cdot) \in \mathcal{P}_{1}(0, \infty)$. Then the K\"othe dual 
$\left( \mathfrak{L}^{p(\cdot), q(\cdot)}(\mathbb{R}^n) \right)' = \mathfrak{L}^{p'(\cdot),  q'(\cdot)}(\mathbb{R}^n)$ up to equivalence of quasi-norms.
\end{theorem}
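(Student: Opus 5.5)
The inclusion $\mathfrak{L}^{p'(\cdot), q'(\cdot)}(\mathbb{R}^n) \hookrightarrow (\mathfrak{L}^{p(\cdot), q(\cdot)}(\mathbb{R}^n))'$ is immediate from Theorem \ref{Holder}. If $f \in \mathfrak{L}^{p'(\cdot), q'(\cdot)}$ and $\|g\|_{\mathfrak{L}^{p(\cdot), q(\cdot)}} \leq 1$, then (\ref{Holder ineq}) gives $\|fg\|_{L^1} \leq (1 + \frac{1}{q_-} - \frac{1}{q_+}) \|f\|_{\mathfrak{L}^{p'(\cdot), q'(\cdot)}}$. Taking the supremum over such $g$ yields $\|f\|_{(\mathfrak{L}^{p(\cdot), q(\cdot)})'} \lesssim \|f\|_{\mathfrak{L}^{p'(\cdot), q'(\cdot)}}$.

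For the reverse inclusion I will reduce to the half-line via Remark \ref{Luxemburg rep}: for $g$ in the Köthe dual, $\|g\|_{(\mathfrak{L}^{p(\cdot), q(\cdot)})'} = \|g^*\|_{(L^{p(\cdot), q(\cdot)}(0,\infty))'}$. Next I identify $L^{p(\cdot), q(\cdot)}(0,\infty)$ as a weighted variable Lebesgue space. Write $w(t) = t^{\frac{1}{p(t)} - \frac{1}{q(t)}}$, so that $\|h\|_{L^{p(\cdot), q(\cdot)}(0,\infty)} = \|wh\|_{L^{q(\cdot)}(0,\infty)}$. The Köthe dual of $L^{q(\cdot)}(0,\infty)$ with respect to Lebesgue measure is $L^{q'(\cdot)}(0,\infty)$, with norms equivalent (constants depending only on $q_\pm$); this is the standard variable-Lebesgue duality. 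Hence
\[
\|k\|_{(L^{p(\cdot), q(\cdot)}(0,\infty))'} = \sup_{\|wh\|_{L^{q(\cdot)}} \leq 1} \int_0^\infty |kh| \, dt = \sup_{\|u\|_{L^{q(\cdot)}} \leq 1} \int_0^\infty |k w^{-1} u| \, dt \approx \|k w^{-1}\|_{L^{q'(\cdot)}(0,\infty)}.
\]
Since $\frac{1}{p'} - \frac{1}{q'} = \frac{1}{q} - \frac{1}{p}$, we have $w^{-1}(t) = t^{\frac{1}{p'(t)} - \frac{1}{q'(t)}}$. Therefore $\|k\|_{(L^{p(\cdot), q(\cdot)}(0,\infty))'} \approx \|k\|_{L^{p'(\cdot), q'(\cdot)}(0,\infty)}$.

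Applying this with $k = g^*$ gives $\|g\|_{(\mathfrak{L}^{p(\cdot), q(\cdot)})'} \approx \|g^*\|_{L^{p'(\cdot), q'(\cdot)}(0,\infty)} = \|g\|_{\mathfrak{L}^{p'(\cdot), q'(\cdot)}}$, again by Remark \ref{Luxemburg rep}, now applied to the pair $p', q' \in \mathcal{P}_1(0,\infty)$. This establishes both inclusions together with the equivalence of norms. One membership point needs checking: if $g$ lies in the Köthe dual, then the remark shows $g^* \in (L^{p(\cdot), q(\cdot)}(0,\infty))'$, so the displayed equivalence applies and $g \in \mathfrak{L}^{p'(\cdot), q'(\cdot)}$.

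The main obstacle is the inequality $\|g^*\|_{(L^{p(\cdot), q(\cdot)}(0,\infty))'} \leq \|g\|_{(\mathfrak{L}^{p(\cdot), q(\cdot)})'}$. Remark \ref{Luxemburg rep} only sketches it, and I would verify it. The difficulty is that the supremum defining the norm of $g^*$ runs over arbitrary $h$ on $(0,\infty)$, not only over decreasing ones. The first step is to note that $\int |g^* h| \leq \int g^* h^*$, where $h^*$ is the rearrangement on $(0,\infty)$ and $\|h^*\|_{L^{p(\cdot), q(\cdot)}(0,\infty)} = \|h\|_{L^{p(\cdot), q(\cdot)}(0,\infty)}$. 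Both of these are problematic, because the weight $w$ is not monotone in general and the space $L^{p(\cdot), q(\cdot)}(0,\infty)$ is not rearrangement invariant.

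I would handle this as Bennett--Sharpley do in Theorem 4.10. The space $\mathbb{R}^n$ is nonatomic and resonant, so $\sup_{\tilde g^* = g^*} \int |\tilde g \, h| = \int g^* h^*$ for every $h$ on $\mathbb{R}^n$ (with $\tilde g$ ranging over functions equimeasurable with $g$). This gives $\|g\|_{(\mathfrak{L}^{p(\cdot), q(\cdot)})'} = \sup\{\int_0^\infty g^* \varphi \, dt : \varphi \text{ decreasing}, \ \|\varphi\|_{L^{p(\cdot), q(\cdot)}(0,\infty)} \leq 1\}$. It then remains to show that restricting the supremum to decreasing $\varphi$ loses only a constant when the function tested is the decreasing $g^*$. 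If a direct rearrangement argument fails, I would instead prove the lower bound $\|g\|_{(\mathfrak{L}^{p(\cdot), q(\cdot)})'} \gtrsim \|g\|_{\mathfrak{L}^{p'(\cdot), q'(\cdot)}}$ directly. For this I would test against $h = g^{*}(t)^{q'(t)-1} t^{q'(t)/p'(t) - 1}$, normalized, which is the extremal from the equality case of Young's inequality. I would bound its $\mathfrak{L}^{p(\cdot), q(\cdot)}$-norm through the modular using that $h$ is decreasing, or else dominate it by a decreasing majorant $\tilde{h} \geq h$, which keeps $\int g^* \tilde h \geq \int g^* h$, and then control the norm of $\tilde{h}$.
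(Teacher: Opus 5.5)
Your first inclusion is the paper's (via Theorem \ref{Holder}). Your half-line step is also correct: the substitution $u=wh$ together with $(L^{q(\cdot)})'\approx L^{q'(\cdot)}$ gives $\|k\|_{(L^{p(\cdot),q(\cdot)}(0,\infty))'}\approx\|k\|_{L^{p'(\cdot),q'(\cdot)}(0,\infty)}$ for every measurable $k$. This is a cleaner substitute for the paper's claim, which tests the extremal $t^{q'/p'-1}|f/c|^{q'-1}$ directly and gets constant $1$.

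The genuine gap is the step you flag and leave open, $\|g^{*}\|_{(L^{p(\cdot),q(\cdot)}(0,\infty))'}\lesssim\|g\|_{(\mathfrak{L}^{p(\cdot),q(\cdot)})'}$. The paper has the same hole. It takes this from Remark \ref{Luxemburg rep}, arguing as in Bennett--Sharpley's Theorem 4.10. That argument uses $\int|h|g^*\le\int h^*g^*$ together with $\|h^*\|=\|h\|$ in the representation space, i.e.\ rearrangement invariance of $L^{p(\cdot),q(\cdot)}(0,\infty)$, which fails here. After resonance, the K\"othe norm is the supremum of $\int_0^\infty g^*\varphi$ over \emph{decreasing} $\varphi$ in the unit ball of $L^{p(\cdot),q(\cdot)}(0,\infty)$. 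So you would need one of two things:
(a) decreasing test functions lose only a constant; or
(b) your extremal $h=t^{q'/p'-1}(g^*)^{q'-1}$ has a decreasing majorant of comparable norm. (Note that $h$ is not decreasing in general, already for constant exponents with $q<p$.)
Under the sole hypothesis $p(\cdot),q(\cdot)\in\mathcal{P}_1(0,\infty)$ neither can be proved, because the inequality, and with it the theorem as stated, is false.

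Take $q\equiv 2$, $p=4/3$ on $E=\bigcup_{k\ge 0}[2^k,\frac{3}{2}\cdot 2^k]$, and $p=2$ elsewhere. Then $v(t):=t^{2/p(t)-1}$ equals $t^{1/2}$ on $E$ and $1$ off $E$, and $t^{2/p'(t)-1}=1/v(t)$. If $F\subset\mathbb{R}^n$ has $|F|=R=2^{K+1}$, then
\[
\|\chi_F\|^2_{\mathfrak{L}^{p'(\cdot),2}}=\|\chi_{(0,R)}\|^2_{(L^{p(\cdot),2}(0,\infty))'}=\int_0^R\frac{dt}{v(t)}\ge|[1,R)\setminus E|\gtrsim R .
\]
Now let $\varphi$ be decreasing with $\int_0^\infty v\varphi^2\le 1$. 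The piece $\int_0^2\varphi$ is at most $2$. For $k\ge1$, $\varphi\ge\varphi(2^k)$ on $E_{k-1}=[2^{k-1},\frac{3}{2}\cdot 2^{k-1}]$ and $\int_{E_{k-1}}v\approx 2^{3k/2}$. Hence $\varphi(2^k)\lesssim 2^{-3k/4}m_{k-1}^{1/2}$ with $m_{k-1}:=\int_{E_{k-1}}v\varphi^2$, so $\int_{2^k}^{2^{k+1}}\varphi\lesssim 2^{k/4}m_{k-1}^{1/2}$. Cauchy--Schwarz then gives $\int_0^R\varphi\lesssim R^{1/4}$. Since $\int_F|f|\le\int_0^R f^*$, it follows that $\|\chi_F\|_{(\mathfrak{L}^{p(\cdot),2})'}\lesssim R^{1/4}$, against $\|\chi_F\|_{\mathfrak{L}^{p'(\cdot),2}}\gtrsim R^{1/2}$. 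So no constant works. The same estimates show that $g$ with $g^*(t)=\min\{1,t^{-1/2}\}$ lies in $(\mathfrak{L}^{p(\cdot),2})'$ but not in $\mathfrak{L}^{p'(\cdot),2}$.

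This $p(\cdot)$ has no limit at $\infty$, so it lies outside $\mathbb{P}_1(0,\infty)$. The missing ingredient is a Hardy-type inequality. For constant exponents, the supremum over decreasing $\varphi$ is comparable to $\|t^{1/p'-1/q'}g^{**}\|_{L^{q'}(0,\infty)}$, which is $\approx\|g\|_{L^{p',q'}}$ because $p'<\infty$. A correct proof of the reverse inclusion has to use the log-type regularity of $\mathbb{P}_1(0,\infty)$ to get the analogous bound, e.g.\ for a decreasing majorant of your extremal $h$. The statement should presumably be restricted to $p(\cdot),q(\cdot)\in\mathbb{P}_1(0,\infty)$, which is all the paper uses later.
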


\begin{proof}
Given $f \in \mathfrak{L}^{p'(\cdot),  q'(\cdot)}(\mathbb{R}^n)$, by Theorem \ref{Holder} there exists a constant $C \geq 1$ such that
\[
\int_{\mathbb{R}^n} |f(x) g(x)| dx \leq C
\| f \|_{\mathfrak{L}^{p'(\cdot), q'(\cdot)}(\mathbb{R}^n)} \| g \|_{\mathfrak{L}^{p(\cdot), q(\cdot)}(\mathbb{R}^n)},
\]
for every $g \in \mathfrak{L}^{p(\cdot), q(\cdot)}(\mathbb{R}^n)$. Then,  $\| f \|_{( \mathfrak{L}^{p(\cdot), q(\cdot)}(\mathbb{R}^n))'} \leq C \| f \|_{\mathfrak{L}^{p'(\cdot), q'(\cdot)}(\mathbb{R}^n)}$ for all $f \in \mathfrak{L}^{p'(\cdot), q'(\cdot)}(\mathbb{R}^n)$, and so 
$\mathfrak{L}^{p'(\cdot),  q'(\cdot)}(\mathbb{R}^n) \subset \left( \mathfrak{L}^{p(\cdot), q(\cdot)}(\mathbb{R}^n) \right)'$ embeds continuously.

To see the opposite inclusion, by Remark \ref{Luxemburg rep}, it suffices to prove the following claim: 
\[
\text{If} \,\, \| f \|_{(L^{p(\cdot), q(\cdot)}(0, \infty) )'} \leq 1 \,\,\, \text{and} \,\,\, \mu_{p'(\cdot), q'(\cdot)}(f) < \infty, \,\,\,  \text{then} \,\,\, \| f \|_{L^{p'(\cdot), q'(\cdot)}(0, \infty)} \leq 1.
\]
In fact, given 
$0 \neq f \in (L^{p(\cdot), q(\cdot)}(0, \infty) )'$, let $\{ f_k \}_{k=1}^{\infty}$ be a sequence of simple functions such that 
$0 \leq f_k \uparrow |f|/ \| f \|_{(L^{p(\cdot), q(\cdot)}(0, \infty) )'}$. By the claim above, we have $\| f_k \|_{L^{p'(\cdot), q'(\cdot)}} \leq 1$, which implies that $\mu_{p'(\cdot), q'(\cdot)}(f_k) \leq \| f_k \|_{L^{p'(\cdot), q'(\cdot)}} \leq 1$ for all $k \geq 1$. This inequality leads to
\[
\mu_{p'(\cdot), q'(\cdot)} \left( |f|/\| f \|_{(L^{p(\cdot), q(\cdot)}(0, \infty) )'} \right) \leq 
\liminf_{k \to \infty} \mu_{p'(\cdot), q'(\cdot)}(f_k) \leq 1,
\]
and so 
\begin{equation} \label{Lpq ineq}
\| f \|_{L^{p'(\cdot), q'(\cdot)}(0, \infty)} \leq \| f \|_{(L^{p(\cdot), q(\cdot)}(0, \infty))'}
\end{equation}
for all $f \in (L^{p(\cdot), q(\cdot)}(0, \infty))'$. Then, given $g \in (\mathfrak{L}^{p(\cdot), q(\cdot)}(\mathbb{R}^n))'$, 
by Remark \ref{Luxemburg rep} and applying (\ref{Lpq ineq}) with $f = g^{*}$, we obtain that
\[
\| g \|_{\mathfrak{L}^{p'(\cdot), q'(\cdot)}(\mathbb{R}^n)} \leq \| g \|_{(\mathfrak{L}^{p(\cdot), q(\cdot)}(\mathbb{R}^n))'},
\]
and $\left( \mathfrak{L}^{p(\cdot), q(\cdot)}(\mathbb{R}^n) \right)' \subset \mathfrak{L}^{p'(\cdot),  q'(\cdot)}(\mathbb{R}^n)$ 
embeds continuously.

To complete the proof, we will now prove our claim. Let $f \in (L^{p(\cdot), q(\cdot)}(0, \infty))'$ such that  
$\mu_{p'(\cdot), q'(\cdot)}(f) < \infty$. Suppose that $\| f \|_{L^{p'(\cdot), q'(\cdot)}(0, \infty)} > 1$, then there exists 
$c > 1$ such that $\mu_{p'(\cdot), q'(\cdot)}(f/c) = 1$. Now, we define $h(t) = t^{q'(t)/p'(t) - 1} |f(t)/c|^{q'(t)-1}$, then 
\[
1 < c = c \, \mu_{p'(\cdot), q'(\cdot)}(f/c)  = \int_{0}^{\infty} |f(t)| h(t) dt,
\]
by \cite[Theorem 2.16]{Nekvinda} we have
\[
1 < c \leq \| f \|_{(L^{p(\cdot), q(\cdot)}(0, \infty))'} \| h \|_{L^{p(\cdot), q(\cdot)}(0, \infty)}.
\]
On the other hand, by taking into account that $\frac{1}{p(t)} - \frac{1}{q(t)} = \frac{1}{q'(t)} - \frac{1}{p'(t)}$, a computation gives
\[
\mu_{p(\cdot), q(\cdot)}(h) = \mu_{p'(\cdot), q'(\cdot)}(f/c) = 1.
\]
So $\| h \|_{L^{p(\cdot), q(\cdot)}(0, \infty)} \leq 1$, which implies that $1 < \| f \|_{(L^{p(\cdot), q(\cdot)}(0, \infty))'}$. This concludes the proof.
\end{proof}

\begin{proposition}
Let $p(\cdot)$ and $q(\cdot) \in \mathbb{P}_{1}(0, \infty)$ such that $q(0) \leq p(0)$ and $p(\infty) \leq q(\infty)$. Then 
$\left( \mathfrak{L}^{p(\cdot), q(\cdot)}(\mathbb{R}^n) \right)'$ is isomorphic to the dual space 
$\left( \mathfrak{L}^{p(\cdot), q(\cdot)}(\mathbb{R}^n) \right)^{*}$.
\end{proposition}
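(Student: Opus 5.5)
The plan is to follow the classical Banach function space argument (as in \cite[Theorem 4.1 and Corollary 4.3, Chapter 1]{Bennett}). Under the hypotheses, $\mathfrak{L}^{p(\cdot), q(\cdot)}(\mathbb{R}^n)$ is equivalent to the Banach function space $\mathfrak{L}_{p(\cdot), q(\cdot)}(\mathbb{R}^n)$ by Proposition \ref{norm Lpq}. By Proposition \ref{abs cont} it has absolutely continuous quasi-norm, and hence so does $\mathfrak{L}_{p(\cdot),q(\cdot)}$ with an equivalent norm. Note that absolute continuity of $\|f\chi_{E_j}\|\downarrow 0$ is preserved under equivalent norms. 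One could therefore simply invoke \cite[Corollary 4.3, Chapter 1]{Bennett}: for a Banach function space $X$ with absolutely continuous norm, $X^{*}$ is isometrically isomorphic to $X'$. Combining this with Theorem \ref{dual} identifies everything up to equivalent norms. For completeness I would also sketch the argument directly.

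\emph{Step 1.} Define $T : (\mathfrak{L}^{p(\cdot), q(\cdot)})' \to (\mathfrak{L}^{p(\cdot), q(\cdot)})^{*}$ by $T g (f) = \int_{\mathbb{R}^n} f g\, dx$. By the definition of the associate norm, $|Tg(f)| \leq \|g\|_{(\mathfrak{L}^{p(\cdot), q(\cdot)})'} \|f\|_{\mathfrak{L}^{p(\cdot), q(\cdot)}}$, so $T$ is bounded and linear.

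\emph{Step 2.} $T$ is norm-preserving up to constants. Given $g$ in the associate space and $\varepsilon>0$, pick $f$ with $\|f\|\le 1$ and $\int|fg|$ close to $\|g\|'$. Replacing $f$ by $|f|\,\overline{\operatorname{sgn} g}$ does not change the quasi-norm, by Theorem \ref{Lpq quasi} $(iv)$. This gives $\|Tg\|_{*} = \|g\|'$, and in particular $T$ is injective.

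\emph{Step 3: surjectivity, the main obstacle.} Given $L \in (\mathfrak{L}^{p(\cdot), q(\cdot)})^{*}$, for each ball $B_k = B(0,k)$ define $\nu_k(E) = L(\chi_{E\cap B_k})$. This is finite by Theorem \ref{Lpq quasi} $(vi)$.

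First, $\nu_k$ is finitely additive. It is countably additive because, if $E=\bigcup E_i$ is disjoint, then $\|\chi_{E\cap B_k} - \chi_{\bigcup_{i\le N}E_i\cap B_k}\| \to 0$ by Proposition \ref{abs cont} applied to the decreasing sets $\bigcup_{i>N}E_i$. Next, $\nu_k$ is absolutely continuous with respect to Lebesgue measure, since $|E|=0$ gives $\chi_E=0$ a.e. Radon–Nikodym then yields $g_k\in L^1(B_k)$, and these functions are consistent, producing a single $g\in L^1_{loc}$ with $L(s)=\int sg$ for simple $s$ supported in a ball.

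\emph{Step 4.} Show $g$ lies in the associate space with $\|g\|' \lesssim \|L\|$. Testing against simple $s$ with $|s|\le |f|$ and $s$ chosen to align with $\overline{\operatorname{sgn} g}$, then using monotone convergence and the Fatou property (Theorem \ref{Lpq quasi} $(v)$), gives $\int |fg| \le \|L\|\,\|f\|$. Finally, $L(f) = \int fg$ holds for all $f$: it holds on simple functions supported on finite measure sets, these are dense by Corollary \ref{simple funct}, and both sides are continuous, the right side by Step 1. Hence $L=Tg$.

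Combined with Theorem \ref{dual}, this also gives $(\mathfrak{L}^{p(\cdot), q(\cdot)})^{*}\cong \mathfrak{L}^{p'(\cdot), q'(\cdot)}$. The delicate point is Step 3: checking countable additivity and the density needed to pass from simple functions to general $f$. Both are exactly where the hypotheses $q(0)\le p(0)$ and $p(\infty)\le q(\infty)$ enter, via Proposition \ref{abs cont}.
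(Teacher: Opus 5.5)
Your proposal is correct and is essentially the paper's argument. The paper combines Proposition \ref{abs cont} with the corresponding abstract result for quasi-Banach function spaces, \cite[Proposition 3.15 (i)]{Lorist}; you instead first renorm via Proposition \ref{norm Lpq} (legitimate since $p(\cdot), q(\cdot)\in\mathbb{P}_{1}(0,\infty)$) so that the Banach-space version \cite[Chapter 1, Corollary 4.3]{Bennett} applies, and your Radon--Nikodym sketch is just the standard proof of that theorem carried out directly for the quasi-norm. One small correction to your closing remark: countable additivity of $\nu_k$ involves only subsets of $B_k$, and $\|\chi_A\|_{\mathfrak{L}^{p(\cdot), q(\cdot)}}\to 0$ as $|A|\to 0$ for any $p(\cdot), q(\cdot)\in\mathcal{P}_{0}(0,\infty)$, so the hypotheses $q(0)\le p(0)$, $p(\infty)\le q(\infty)$ are really used only in the density step (Corollary \ref{simple funct}).
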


\begin{proof}
Follows from Proposition \ref{abs cont} and \cite[Proposition 3.15 (i)]{Lorist}.
\end{proof}

Let $f$ be a locally integrable function on $\mathbb{R}^{n}$. The Hardy-Littlewood maximal function of $f$ is defined by
\begin{equation} \label{max op}
Mf(x) = \sup_{Q \ni x} \frac{1}{|Q|} \int_{Q} |f(y)| dy,
\end{equation}
where the supremum is taken over all cubes $Q$ containing $x$.

The following two results state the boundedness of the maximal operator $M$ on $\mathfrak{L}^{p(\cdot), q(\cdot)}(\mathbb{R}^n)$ and the Fefferman-Stein vector-valued inequality for $M$. Both results are crucial to study the variable Hardy-Lorentz spaces associated with 
$\mathfrak{L}^{p(\cdot), q(\cdot)}(\mathbb{R}^n)$.

\begin{proposition} \label{acot MHL}
Let $p(\cdot)$ and $q(\cdot) \in \mathbb{P}_{1}(0, \infty)$. Then, the Hardy-Littlewood maximal operator $M$ is bounded on 
$\mathfrak{L}^{p(\cdot), q(\cdot)}(\mathbb{R}^n)$.
\end{proposition}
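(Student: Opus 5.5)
We plan to reduce the boundedness of $M$ to the boundedness of the Hardy averaging operator on the weighted variable space $L^{p(\cdot), q(\cdot)}(0,\infty)$. The starting point is the classical Riesz--Herz inequality: there is a dimensional constant $c_n$ with
\[
(Mf)^{*}(t) \leq c_n f^{**}(t) = \frac{c_n}{t}\int_0^t f^{*}(s)\,ds, \qquad t > 0,
\]
for every locally integrable $f$ (see \cite{Bennett}, Theorem 3.8 in Chapter 3). Remark \ref{Luxemburg rep} says that $\| g \|_{\mathfrak{L}^{p(\cdot), q(\cdot)}} = \| g^{*} \|_{L^{p(\cdot), q(\cdot)}(0,\infty)}$, and $L^{p(\cdot),q(\cdot)}(0,\infty)$ is a Banach function space, so it is a lattice. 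Combining these facts gives
\[
\| Mf \|_{\mathfrak{L}^{p(\cdot), q(\cdot)}} = \| (Mf)^{*} \|_{L^{p(\cdot), q(\cdot)}(0,\infty)} \leq c_n \| f^{**} \|_{L^{p(\cdot), q(\cdot)}(0,\infty)} = c_n \| f \|_{\mathfrak{L}_{p(\cdot), q(\cdot)}}.
\]

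Next we invoke Proposition \ref{norm Lpq}. Since $p(\cdot), q(\cdot) \in \mathbb{P}_{1}(0,\infty)$, it gives $\| f \|_{\mathfrak{L}_{p(\cdot), q(\cdot)}} \leq C \| f \|_{\mathfrak{L}^{p(\cdot), q(\cdot)}}$. Therefore $\| Mf \|_{\mathfrak{L}^{p(\cdot), q(\cdot)}} \leq c_n C \| f \|_{\mathfrak{L}^{p(\cdot), q(\cdot)}}$, which is the claim.

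One preliminary point must be checked before applying the Riesz--Herz inequality: every $f\in\mathfrak{L}^{p(\cdot),q(\cdot)}$ must be locally integrable, so that $Mf$ is defined. This is where the hypothesis $p_->1$ is used. By Theorem \ref{dual}, $\chi_K \in \mathfrak{L}^{p'(\cdot),q'(\cdot)}$ for any compact set $K$, because $\chi_K^{*}=\chi_{(0,|K|)}$ and $\rho_{p',q'}$ of it is finite: near $0$ the weight $t^{q'(t)/p'(t)-1}$ is integrable, since the exponent is bounded below by $q'_-/p'_+ - 1 > -1$. Hölder's inequality (Theorem \ref{Holder}) then gives $\int_K|f|<\infty$.

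The main obstacle is the step hidden inside Proposition \ref{norm Lpq}: the boundedness of the Hardy operator $h \mapsto \frac1t\int_0^t h$ on $L^{p(\cdot),q(\cdot)}(0,\infty)$, restricted to decreasing functions. This requires the log-decay conditions at $0$ and $\infty$. Since we may cite it as \cite[Theorem 2.4]{Ephre}, no further work is needed, and the rest of the argument is the routine lattice monotonicity above.
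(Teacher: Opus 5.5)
Your argument is correct, but it is not the paper's route. The paper gives a one-line proof: it invokes \cite[Theorem 4.4]{Zhao}, a boundedness theorem for a more general maximal operator on $\mathfrak{L}^{p(\cdot),q(\cdot)}(\mathbb{R}^n)$, specialized to $\psi_j(x)=x$.

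You instead use rearrangement invariance, in three steps:
\begin{itemize}
\item Herz's inequality $(Mf)^*\le c_n f^{**}$;
\item the lattice property of the Luxemburg norm on $L^{p(\cdot),q(\cdot)}(0,\infty)$, which follows directly from monotonicity of the modular, so the Banach-function-space claim is not actually needed;
\item Proposition \ref{norm Lpq}, i.e.\ the Hardy inequality on decreasing functions from \cite[Theorem 2.4]{Ephre}.
\end{itemize}

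What your route buys: a proof that is self-contained given results the paper has already recorded. It makes clear that the log-decay hypotheses enter only through that Hardy inequality. It also shows that any sublinear operator with $(Tf)^*\lesssim f^{**}$ is bounded on $\mathfrak{L}^{p(\cdot),q(\cdot)}(\mathbb{R}^n)$ for $p(\cdot),q(\cdot)\in\mathbb{P}_1(0,\infty)$. The paper's citation is shorter and rests on a more general external theorem, but it is a black box.

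One small streamlining: Herz's inequality in \cite{Bennett} is stated for $f\in L^1+L^\infty$. That membership follows immediately from Proposition \ref{norm Lpq}, because finiteness of $\| f^{**}\|_{L^{p(\cdot),q(\cdot)}(0,\infty)}$ forces $\int_0^t f^*(s)\,ds<\infty$ for some $t>0$. So your separate H\"older argument for local integrability is correct but unnecessary. Also, it is the direct computation of $\rho_{p'(\cdot),q'(\cdot)}(\chi_K)$, not Theorem \ref{dual}, that gives $\chi_K\in\mathfrak{L}^{p'(\cdot),q'(\cdot)}$.
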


\begin{proof}
Follows from \cite[Theorem 4.4]{Zhao} with $\psi_j (x)= x$ for every $j= 1, ..., n$.
\end{proof}

\begin{proposition} \label{Feff-Stein 1}
Let $p(\cdot)$ and $q(\cdot) \in \mathbb{P}_{0}(0, \infty)$. Then, for any $1 < r < \infty$ and any $\tau \in (0, \min \{1, p_{-}, q_{-}\})$, there exists a positive constant $C$ such that for all sequences $\{ f_j \}_{j=1}^{\infty}$ of measurable functions
\[
\left\| \left( \sum_{j=1}^{\infty} (M f_j)^r \right)^{1/r} \right\|_{\mathfrak{L}^{p(\cdot)/\tau, \,\, q(\cdot)/\tau}(\mathbb{R}^n)} \leq C \left\| \left( \sum_{j=1}^{\infty} |f_j|^r \right)^{1/r} \right\|_{\mathfrak{L}^{p(\cdot)/\tau, \,\, q(\cdot)/\tau}(\mathbb{R}^n)}, 
\]
where $M$ is the Hardy-Littlewood maximal operator.
\end{proposition}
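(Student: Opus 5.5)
The plan is to deduce the vector-valued inequality from the boundedness of $M$ on a suitable Banach function space, via extrapolation or a Rubio de Francia type duality argument.

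\emph{Step 1.} Put $X_\tau := \mathfrak{L}^{p(\cdot)/\tau,\, q(\cdot)/\tau}(\mathbb{R}^n)$. Since $\tau < \min\{1,p_-,q_-\}$, the exponents $p(\cdot)/\tau$ and $q(\cdot)/\tau$ lie in $\mathbb{P}_{1}(0,\infty)$. The log-decay conditions at $0$ and $\infty$ are preserved under division by a constant, and $(p/\tau)_- > 1$, $(q/\tau)_- > 1$. So Proposition \ref{norm Lpq} applies: $X_\tau$ coincides with the Banach function space $\mathfrak{L}_{p(\cdot)/\tau,\, q(\cdot)/\tau}$, with equivalent norms.

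Proposition \ref{acot MHL} then gives that $M$ is bounded on $X_\tau$. By Theorem \ref{dual}, the Köthe dual is $X_\tau' = \mathfrak{L}^{(p/\tau)'(\cdot),\,(q/\tau)'(\cdot)}$. The conjugate exponents $(p/\tau)'$ and $(q/\tau)'$ again belong to $\mathbb{P}_1(0,\infty)$: their limits exist, their ranges are bounded away from $1$ and $\infty$, and the log conditions transfer because $x \mapsto x'$ is Lipschitz on compact subsets of $(1,\infty)$. A second application of Proposition \ref{acot MHL} therefore shows that $M$ is bounded on $X_\tau'$ as well.

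\emph{Step 2.} With boundedness of $M$ on both $X_\tau$ and $X_\tau'$, I will invoke the known extrapolation principle for Banach function spaces (e.g.\ Cruz-Uribe--Martell--Pérez style extrapolation, in the form used in \cite{Sawano} for ball Banach function spaces). It states that the weighted Fefferman--Stein inequality in $L^{r}(w)$ for $w \in A_1$ (or $A_r$) transfers to $X$ whenever $M$ is bounded on $X'$; the inequality $\|F\|_X \le C\|G\|_X$ follows. Concretely, for $h \in X_\tau'$ with $\|h\|_{X_\tau'} \le 1$, I would build the Rubio de Francia weight
\[
Rh = \sum_{k \ge 0} \frac{M^k h}{(2\|M\|_{X_\tau'})^k}.
\]
This satisfies $h \le Rh$, $\|Rh\|_{X_\tau'} \le 2$, and $Rh \in A_1$ with constant at most $2\|M\|_{X_\tau'}$. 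Write $F = (\sum_j (Mf_j)^r)^{1/r}$ and $G = (\sum_j |f_j|^r)^{1/r}$. I would estimate $\int F h$ through $\int F\, Rh$. Since $X_\tau$ is not an $L^1$-based setting, it is cleaner to apply the duality at a lower power: pick $1 < s < r$ and work with $X_\tau^{1/s}$. The weighted estimate $\int F^s w \lesssim \int G^s w$ for $w \in A_1 \subset A_{s}$ is the classical Andersen--John weighted Fefferman--Stein inequality for $1 < s < \infty$ and $1 < r < \infty$. Hölder (Theorem \ref{Holder}) then gives $\int G^s\, Rh \lesssim \|G^s\|_{X_\tau^{1/s}}$.

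\emph{Step 3.} To run this argument, both $X_\tau^{1/s} = \mathfrak{L}^{p/(\tau s),\, q/(\tau s)}$ and its Köthe dual must satisfy the hypotheses above. I will shrink $\tau$ slightly: choose $s>1$ close to $1$ so that $\tau s < \min\{1,p_-,q_-\}$ still holds, which is possible because the inequality for $\tau$ is strict. Steps 1--2 then apply to $\tau s$ in place of $\tau$. By Lemma \ref{Lpqs}, $\|F\|_{X_\tau}^s = \|F^s\|_{X_{\tau s}}$, and the Köthe duality identity $\|F^s\|_{X_{\tau s}} \approx \sup_{\|h\|_{X'_{\tau s}}\le1} \int F^s h$ holds by Theorem \ref{dual}, since $X_{\tau s}$ is a Banach function space with the Fatou property. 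Combining gives $\|F\|_{X_\tau}^s \lesssim \|G\|_{X_\tau}^s$, which is the claim.

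The main obstacle is verifying that the conjugate exponents of $p/(\tau s)$ and $q/(\tau s)$ remain in $\mathbb{P}_1(0,\infty)$, so that Proposition \ref{acot MHL} yields boundedness of $M$ on the Köthe dual. I would check this directly from the definition. Monotone convergence then removes any a priori finiteness assumption on $F$.
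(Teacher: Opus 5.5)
Your proof is correct and follows the paper's approach. Your intermediate exponent $\tau s \in (\tau, \min\{1,p_-,q_-\})$ plays the role of the paper's $p_0$, and in both arguments the key input is the boundedness of $M$ on the K\"othe dual $\mathfrak{L}^{(p/p_0)'(\cdot),(q/p_0)'(\cdot)}$, obtained from Theorem \ref{dual} and Proposition \ref{acot MHL}. The paper then cites the ready-made extrapolation result \cite[Proposition 22]{Rocha} with $1/\sigma=\tau$, whereas you carry out that Rubio de Francia argument by hand (via the Andersen--John weighted inequality in $L^{s}(w)$, $s=p_0/\tau>1$) and explicitly check that the conjugate exponents remain in $\mathbb{P}_1(0,\infty)$, a point the paper uses tacitly.
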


\begin{proof}
Fix $\tau \in (0, \min \{1, p_{-}, q_{-}\})$ and take $p_0 \in (\tau, \min \{1, p_{-}, q_{-}\})$, then $p(\cdot)/p_0$ and 
$q(\cdot)/p_0 \in \mathbb{P}_{1}(0, \infty)$. Then, by Theorem \ref{dual}, we have 
\[
\left( ( \mathfrak{L}^{p(\cdot), q(\cdot)}(\mathbb{R}^n) )^{\frac{1}{p_0}} \right)' = \left(\mathfrak{L}^{\frac{p(\cdot)}{p_0}, \frac{q(\cdot)}{p_0}}
(\mathbb{R}^n) \right)' = \mathfrak{L}^{\left(\frac{p(\cdot)}{p_0} \right)', \left(\frac{q(\cdot)}{p_0} \right)'}(\mathbb{R}^n),
\]
and by Proposition \ref{acot MHL}, $M$ is bounded on $\left( ( \mathfrak{L}^{p(\cdot), q(\cdot)}(\mathbb{R}^n) )^{\frac{1}{p_0}} \right)'$. Finally, we apply \cite[Proposition 22]{Rocha} with $u = r$ and $1/\sigma = \tau \in (0, p_0 )$.
\end{proof}

\section{Variable Hardy-Lorentz spaces}

We are now in a position to define the variable Hardy-Lorentz spaces associated with $\mathfrak{L}^{p(\cdot), q(\cdot)}(\mathbb{R}^n)$ following to \cite{Sawano}.

Let $M$ be the Hardy-Littlewood maximal operator given by (\ref{max op}). For a locally integrable function $f$ on $\mathbb{R}^{n}$ and
 $\theta \in (0, \infty)$, the \textit{powered Hardy-Littlewood maximal function} $M^{(\theta)}f$ is defined by
\[
(M^{(\theta)}f)(x) = \left[ M (|f|^{\theta})(x) \right]^{1/ \theta}.
\]

In \cite{Sawano}, the authors developed the theory of Hardy spaces associated with a ball quasi-Banach function space $(X, \| \cdot \|_X)$ under the following two assumptions:

\

$A1)$ Assume that, for some $\theta, s \in (0, 1]$ with $\theta < s$, there exists a positive constant $C$ such that for any sequence of functions $\{ f_j \}_{j=1}^{\infty} \subset L^{1}_{loc}(\mathbb{R}^n)$
\begin{equation} \label{A1}
\left\| \left\{ \sum_{j=1}^{\infty} (M^{(\theta)} f_j)^{s} \right\}^{1/s} \right\|_{X} \leq C
\left\| \left\{ \sum_{j=1}^{\infty} |f_j|^{s} \right\}^{1/s}  \right\|_{X}.
\end{equation}

$A2)$ Assume that there exist $s \in (0, 1]$, $\widetilde{q} \in (1, \infty]$ and a positive constant $C$ such that
for any $f \in (X^{1/s})'$
\begin{equation} \label{A2}
\left\|  M^{((\widetilde{q}/s)')} f \right\|_{(X^{1/s})'} \leq C \left\| f  \right\|_{(X^{1/s})'}.
\end{equation}

\begin{remark} \label{cond equiv A2}
We observe that (\ref{A2}) is equivalent to that the Hardy-Littlewood maximal operator $M$ be bounded on $[(X^{1/s})']^{1/(\widetilde{q}/s)'}$.
\end{remark}

The following theorem states that (\ref{A1}) and (\ref{A2}) hold true for $X = \mathfrak{L}^{p(\cdot), q(\cdot)}(\mathbb{R}^n)$, when 
$p(\cdot)$ and $q(\cdot) \in \mathbb{P}_{0}(0, \infty)$.

\begin{theorem} \label{Asump A1A2}
Let $p(\cdot)$ and $q(\cdot) \in \mathbb{P}_{0}(0, \infty)$ and $X = \mathfrak{L}^{p(\cdot), q(\cdot)}(\mathbb{R}^n)$. Then the assumption 
(\ref{A1}) holds true for any $s \in (0, 1]$ and any $\theta \in (0, \min\{ s, p_{-}, q_{-} \})$, and the assumption (\ref{A2}) holds true 
for any $s \in (0, \min\{ 1, p_{-}, q_{-} \})$ and any $\widetilde{q} \in (\max \{1, p_{+}, q_{+} \}, \infty]$.
\end{theorem}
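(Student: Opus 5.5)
For (\ref{A1}), I will rewrite the inequality as a Fefferman--Stein inequality for $M$ on a convexified space and then invoke Proposition \ref{Feff-Stein 1}. Fix $s \in (0,1]$ and $\theta \in (0, \min\{s, p_-, q_-\})$, and put $g_j := |f_j|^{\theta}$ and $r := s/\theta > 1$. Then $(M^{(\theta)} f_j)^s = (M g_j)^{r}$. By Lemma \ref{Lpqs} with exponent $1/\theta$,
\[
\left\| \left\{ \sum_{j} (M^{(\theta)} f_j)^{s} \right\}^{1/s} \right\|_{\mathfrak{L}^{p(\cdot), q(\cdot)}} = \left\| \left( \sum_{j} (M g_j)^{r} \right)^{1/r} \right\|^{1/\theta}_{\mathfrak{L}^{p(\cdot)/\theta, \, q(\cdot)/\theta}},
\]
and the same identity holds on the right-hand side with $M g_j$ replaced by $g_j$. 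Since $\theta < \min\{1,p_-,q_-\}$, Proposition \ref{Feff-Stein 1} applies with $\tau = \theta$ and this $r$, which gives (\ref{A1}) with constant $C^{1/\theta}$. The only point to watch is that $r > 1$ requires $\theta < s$. This is built into the hypothesis, and it is the reason we need $\theta<s$ and not just $\theta \le s$.

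For (\ref{A2}), fix $s \in (0, \min\{1, p_-, q_-\})$ and $\widetilde{q} \in (\max\{1,p_+,q_+\}, \infty]$. By Remark \ref{cond equiv A2}, it suffices to show that $M$ is bounded on $[(X^{1/s})']^{1/(\widetilde q/s)'}$. First, $X^{1/s} = \mathfrak{L}^{p(\cdot)/s,\, q(\cdot)/s}(\mathbb{R}^n)$ by Lemma \ref{Lpqs}. Since $s < p_-, q_-$, both exponents $p/s$ and $q/s$ lie in $\mathbb{P}_1(0,\infty)$, because dividing by a constant preserves the log-decay conditions. Theorem \ref{dual} then gives
\[
(X^{1/s})' = \mathfrak{L}^{(p/s)'(\cdot),\,(q/s)'(\cdot)}(\mathbb{R}^n)
\]
with equivalent norms. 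Set $u := (\widetilde q/s)' \ge 1$. Applying Lemma \ref{Lpqs} again, the space $[(X^{1/s})']^{1/u}$ equals $\mathfrak{L}^{(p/s)'(\cdot)/u,\;(q/s)'(\cdot)/u}(\mathbb{R}^n)$, up to equivalent quasi-norms; the equivalence constant is raised to the power $u$.

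It then suffices to apply Proposition \ref{acot MHL} to the exponents $(p/s)'/u$ and $(q/s)'/u$, after checking that they lie in $\mathbb{P}_1(0,\infty)$. If $\widetilde q = \infty$, then $u = 1$, and $(p/s)'$ lies in $\mathbb{P}_1$ as soon as $(p/s)_+ < \infty$. If $\widetilde q<\infty$, we need $((p/s)')_- > u$, that is, $(p_+/s)' > (\widetilde q/s)'$. Since $t\mapsto t'$ is decreasing, this is equivalent to $p_+ < \widetilde q$, which holds by assumption; the same argument handles $q$. The log-type continuity at $0$ and at $\infty$ transfers to $(p/s)'/u$. The reason is that $t\mapsto (t/s)'/u$ is Lipschitz on the compact range $[p_-,p_+]$, since $p_-/s>1$ keeps us away from the singularity. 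Hence the limits exist, and $|p(t)-p(0)|\lesssim 1/(-\log t)$ is preserved with a new constant.

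I expect the main obstacle to be this bookkeeping for (\ref{A2}). One must verify that the conjugate and rescaled exponents stay in $\mathbb{P}_1$, with the strict inequality $\widetilde q > \max\{p_+,q_+\}$ being exactly what is needed. One must also be careful that Theorem \ref{dual} identifies $(X^{1/s})'$ only up to equivalent norms. This is harmless, because boundedness of $M$ is stable under equivalent quasi-norms and under taking $u$-th powers.
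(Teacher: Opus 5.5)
Your proposal is correct and follows the paper's proof. For (A1), you rewrite the $X$-quasi-norm as a $1/\theta$ power of the $\mathfrak{L}^{p(\cdot)/\theta,\, q(\cdot)/\theta}$-quasi-norm and apply Proposition \ref{Feff-Stein 1} with $\tau=\theta$, $r=s/\theta$ to $|f_j|^\theta$; for (A2), you combine Remark \ref{cond equiv A2}, Theorem \ref{dual} and Proposition \ref{acot MHL}. Your check that $(p/s)'/(\widetilde q/s)'$ and $(q/s)'/(\widetilde q/s)'$ lie in $\mathbb{P}_1(0,\infty)$ exactly when $\widetilde q > \max\{p_+,q_+\}$ spells out bookkeeping that the paper leaves implicit.
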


\begin{proof}
Given $s \in (0, 1]$, let $\theta \in (0, \min\{ s, p_{-}, q_{-} \})$, then
\[
\left\Vert  \left\{ \sum_{j=1}^{\infty} (M^{(\theta)} f_j)^{s} \right\}^{1/s} \right\Vert_{X} =
\left\Vert  \left\{ \sum_{j=1}^{\infty} (M \vert f_j \vert^{\theta})^{s/\theta} \right\}^{1/s} \right\Vert_{X} =
\left\Vert  \left\{ \sum_{j=1}^{\infty} (M \vert f_j \vert^{\theta})^{s/\theta} \right\}^{\theta/s} \right\Vert_{X^{1/\theta}}^{1/\theta},
\]
applying Proposition \ref{Feff-Stein 1} with $\tau = \theta$, $r = s/\theta$ and $\vert f_j \vert^{\theta}$ instead of $f_j$, we get
\[
\leq C \left\Vert  \left\{ \sum_{j=1}^{\infty} (\vert f_j \vert^{\theta})^{s/\theta} \right\}^{\theta/s} \right\Vert_{X^{1/\theta}}^{1/\theta}
= C
\left\Vert \left\{ \sum_{j=1}^{\infty} |f_j|^{s} \right\}^{1/s}  \right\Vert_{X}.
\]
Thus, the assumption (\ref{A1}) holds true for $X = \mathfrak{L}^{p(\cdot), q(\cdot)}(\mathbb{R}^n)$, any $s \in (0, 1]$ and 
any $\theta \in (0, \min\{ s, p_{-}, q_{-} \})$.

Finally, by Theorem \ref{dual}, Proposition \ref{acot MHL} and Remark \ref{cond equiv A2}, the assumption (\ref{A2}) holds true for 
$X = \mathfrak{L}^{p(\cdot), q(\cdot)}(\mathbb{R}^n)$, any $s \in (0, \min\{ 1, p_{-}, q_{-} \})$ and any 
$\widetilde{q} \in (\max \{1, p_{+}, q_{+} \}, \infty]$.
\end{proof}

Given $L \in \mathbb{Z}_{+}$, let 
\[
\mathcal{F}_{L}=\left\{ \varphi \in \mathcal{S}(\mathbb{R}^{n}):\sum\limits_{\left\vert \mathbf{\beta }\right\vert \leq L}\sup\limits_{x\in \mathbb{R}^{n}}\left( 1+\left\vert x\right\vert \right)^{L}\left\vert \partial^{\mathbf{\beta }}
\varphi(x) \right\vert := \| \varphi \|_{\mathcal{S}(\mathbb{R}^{n}), \, L}  \leq 1\right\}.
\] 
Given $f \in \mathcal{S}'(\mathbb{R}^{n})$, define the maximal function $\mathcal{M}^{0}_{L} f$ by 
\begin{equation} \label{grand max cero}
\mathcal{M}^{0}_{L} f(x) :=\sup \left\{ |\left( \phi_t \ast f\right)(x) | : t > 0, \phi \in \mathcal{F}_{L} \right\}.
\end{equation}

Now, we introduce the Hardy type space associated with $\mathfrak{L}^{p(\cdot), q(\cdot)}(\mathbb{R}^n)$.

\begin{definition} \label{Hx space}
Let $p(\cdot)$, $q(\cdot)\in \mathbb{P}_{0}(0, \infty)$, $r \in (0, \min\{ 1, p_{-}, q_{-}\})$ and $L \geq \lfloor  n/r + 3 \rfloor$. Then 
the Hardy space $H^{p(\cdot), q(\cdot)}(\mathbb{R}^n)$ associated with $\mathfrak{L}^{p(\cdot), q(\cdot)}(\mathbb{R}^n)$ is defined as
\[
H^{p(\cdot), q(\cdot)}(\mathbb{R}^n) = \left\{ f \in \mathcal{S}'(\mathbb{R}^n) : \| \mathcal{M}^{0}_{L} f \|_{\mathfrak{L}^{p(\cdot), q(\cdot)}} < \infty \right\}.
\]
Fixed  $L \geq \lfloor  n/r + 3 \rfloor$, we consider $\| f \|_{H^{p(\cdot), q(\cdot)}} = \| \mathcal{M}^{0}_{L} f \|_{\mathfrak{L}^{p(\cdot), q(\cdot)}}$.
\end{definition}

\begin{remark}
For $r \in (0, \min\{ 1, p_{-}, q_{-}\})$, Proposition \ref{acot MHL} gives the boundedness of the Hardy-Littlewood maximal operator $M$ on $(\mathfrak{L}^{p(\cdot), q(\cdot)}(\mathbb{R}^n))^{1/r}$. Then, the well-definition of variable Hardy-Lorentz spaces follows from \cite[Theorem 3.1 - (ii)]{Sawano}.
\end{remark}

Before establishing the infinite atomic decomposition for elements of $H^{p(\cdot), q(\cdot)}(\mathbb{R}^n)$, we give the definition of 
$\mathfrak{L}^{p(\cdot), q(\cdot)}$-atom.

\begin{definition}
Let $p(\cdot)$ and $q(\cdot) \in \mathbb{P}_{0}(0, \infty)$ and let $r \in [1, \infty]$. Assume that $d \in \mathbb{Z}_{+}$ satisfies 
$d \geq d_{\mathfrak{L}^{p(\cdot), q(\cdot)}}$, where $d_{\mathfrak{L}^{p(\cdot), q(\cdot)}} := \lfloor n (1/\theta - 1) \rfloor$ and 
$\theta \in (0, 1)$ is as in Theorem \ref{Asump A1A2}. Then the function $a(\cdot)$ is called an 
$({\mathfrak{L}^{p(\cdot), q(\cdot)}}, r, d)$-atom if there exists a cube $Q \in \mathcal{Q}$ such that $\supp(a) \subset Q$,
\[
\| a \|_{L^r(\mathbb{R}^n)} \leq \frac{|Q|^{1/r}}{\| \chi_Q \|_{{\mathfrak{L}^{p(\cdot), q(\cdot)}}}},
\]
and $a(\cdot) \perp \mathcal{P}_d$.
\end{definition}

\begin{remark} \label{infinite atom}
For $r \geq 1$ fixed, every $({\mathfrak{L}^{p(\cdot), q(\cdot)}}, \infty, d)$-atom is an $({\mathfrak{L}^{p(\cdot), q(\cdot)}}, r, d)$-atom.
\end{remark}

The following infinite atomic decomposition for elements of $H^{p(\cdot), q(\cdot)}(\mathbb{R}^n) \cap L^{p_0}(\mathbb{R}^n)$ is a version of \cite[Theorem 3.7]{Sawano}.

\begin{theorem} \label{X atomic decomp}
Let $p(\cdot)$ and $q(\cdot) \in \mathbb{P}_{0}(0, \infty)$ and let $\theta, s \in (0,1]$ be as in Theorem \ref{Asump A1A2} satisfying 
(\ref{A1}), let $d \geq d_{\mathfrak{L}^{p(\cdot), q(\cdot)}}$ be a fixed integer and $p_0 \in (1, \infty)$. Then, for every $f \in H^{p(\cdot), q(\cdot)}(\mathbb{R}^n) \cap L^{p_0}(\mathbb{R}^n)$, there exist a sequence $\{ a_j \}_{j=1}^{\infty}$ of $(\mathfrak{L}^{p(\cdot), q(\cdot)}, \infty, d)$-atoms, supported on the cubes $\{ Q_j \}_{j=1}^{\infty} \subset \mathcal{Q}$, and a sequence 
$\{ \lambda_j \}_{j=1}^{\infty} \subset (0, \infty)$ such that
\begin{equation} \label{converg Lp}
f = \sum_{j=1}^{\infty} \lambda_j a_j \,\,\,\,\,\,\,\, \text{in} \,\,\,\, L^{p_0}(\mathbb{R}^n),
\end{equation}
and
\begin{equation} \label{atomic norm}
\left\| \left\{ \sum_{j=1}^{\infty} \left( \frac{\lambda_j}{\| \chi_{Q_j} \|_{\mathfrak{L}^{p(\cdot), q(\cdot)}}} \right)^s \chi_{Q_j} \right\}^{1/s} \right\|_{\mathfrak{L}^{p(\cdot), q(\cdot)}(\mathbb{R}^n)} \lesssim_{s}  \| f \|_{H^{p(\cdot), q(\cdot)}(\mathbb{R}^n)},
\end{equation}
where the implicit positive constant is independent of $f$, but depends on $s$.
\end{theorem}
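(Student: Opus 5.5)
The theorem is essentially \cite[Theorem 3.7]{Sawano} specialized to $X = \mathfrak{L}^{p(\cdot), q(\cdot)}(\mathbb{R}^n)$, so the plan is to verify that every hypothesis of that abstract result is satisfied by this space and then read off (\ref{converg Lp}) and (\ref{atomic norm}).

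\textbf{Step 1: the ball quasi-Banach structure.} By Theorem \ref{Lpq quasi}, $X$ is a quasi-Banach function space. Property $(vi)$ gives $\|\chi_B\|_X<\infty$ for every ball $B$, so $X$ is a ball quasi-Banach function space in the sense of \cite[Definition 2.2]{Sawano}.

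\textbf{Step 2: well-definedness of $H^{p(\cdot),q(\cdot)}(\mathbb{R}^n)$.} We need $M$ bounded on $X^{1/r}$ for some $r\in(0,\min\{1,p_-,q_-\})$. By Lemma \ref{Lpqs}, $X^{1/r}=\mathfrak{L}^{p(\cdot)/r,q(\cdot)/r}$. The exponents $p(\cdot)/r$ and $q(\cdot)/r$ lie in $\mathbb{P}_1(0,\infty)$, since the log-decay conditions survive division by a constant. Proposition \ref{acot MHL} then gives boundedness of $M$ on $X^{1/r}$. This is exactly the hypothesis of \cite[Theorem 3.1]{Sawano} used in Definition \ref{Hx space}, and it makes $\|\mathcal{M}^0_L f\|_X$ a legitimate quasi-norm independent of the admissible $L$, up to equivalence.

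\textbf{Step 3: the Fefferman--Stein assumption and the atom order.} Theorem \ref{Asump A1A2} supplies (\ref{A1}) for the given $\theta<s\le 1$. Since $d\ge d_{\mathfrak{L}^{p(\cdot),q(\cdot)}}=\lfloor n(1/\theta-1)\rfloor$, the moment condition on the atoms matches the one required in \cite[Theorem 3.7]{Sawano}.

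\textbf{Step 4: invoke the abstract decomposition.} For $f\in H^{p(\cdot),q(\cdot)}\cap L^{p_0}$ we apply \cite[Theorem 3.7]{Sawano}. Its proof runs the Calder\'on--Zygmund decomposition on the level sets $\{\mathcal{M}^0_L f>2^k\}$ and uses Whitney cubes. This yields atoms $a_j$ with $\|a_j\|_\infty\le \|\chi_{Q_j}\|_X^{-1}$ and coefficients $\lambda_j\approx 2^k\|\chi_{Q_j}\|_X$.

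\begin{itemize}
\item \emph{Convergence (\ref{converg Lp}).} Convergence in $L^{p_0}$ comes from the fact that $f\in L^{p_0}$: the pieces are controlled by $\mathcal{M}^0_Lf\in L^{p_0}$, and the good parts converge in $L^{p_0}$.
\item \emph{Norm estimate (\ref{atomic norm}).} The sum in (\ref{atomic norm}) is dominated by $\bigl(\sum_k 2^{ks}\chi_{\{\mathcal{M}^0_Lf>2^k\}}\bigr)^{1/s}\lesssim \mathcal{M}^0_L f$, using the bounded overlap of the Whitney cubes and the lattice property $(iv)$.
\end{itemize}

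\textbf{Main obstacle.} The expected difficulty is purely bookkeeping: checking that \cite[Theorem 3.7]{Sawano} needs only (\ref{A1}) together with the boundedness of $M$ on $X^{1/r}$, and not (\ref{A2}) or absolute continuity of the quasi-norm. The latter is only available when $q(0)\le p(0)$ and $p(\infty)\le q(\infty)$, which is not assumed here. If some step does need the dual-side hypothesis, Theorem \ref{Asump A1A2} also provides (\ref{A2}), so no extra restriction on the exponents should arise.
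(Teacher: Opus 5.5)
Your proposal follows the paper's proof. Both take the existence of the $(\mathfrak{L}^{p(\cdot), q(\cdot)}, \infty, d)$-atoms and the estimate (\ref{atomic norm}) directly from \cite[Theorem 3.7]{Sawano}, and both upgrade convergence in $\mathcal{S}'$ to convergence in $L^{p_0}(\mathbb{R}^n)$ by rerunning the Stein-type Calder\'on--Zygmund/Whitney construction using $\mathcal{M}^0_L f \in L^{p_0}(\mathbb{R}^n)$; the paper defers this last step to \cite[Theorem 3.1]{Pablo}. One minor slip in your sketch of the internals: the atoms sit on fixed dilates of the Whitney cubes, which need not lie in $\{\mathcal{M}^0_L f > 2^k\}$, so (\ref{atomic norm}) is where (\ref{A1}) is actually used, not just bounded overlap and the lattice property — harmless here, since you use \cite[Theorem 3.7]{Sawano} as a black box.
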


\begin{proof}
The existence of a such atomic decomposition as well as the validity of inequality (\ref{atomic norm}) are guaranteed by 
\cite[Theorem 3.7]{Sawano}. In principle, the convergence in (\ref{converg Lp}) is in $\mathcal{S}'(\mathbb{R}^n)$. To see the 
convergence of the atomic series to $f$ in $L^{p_0}(\mathbb{R}^{n})$, we point out that the construction of a such atomic decomposition 
(see \cite[Proposition 4.3]{Sawano}) is analogous to the one given for classical Hardy spaces (see \cite[Chapter III]{Stein}). Since 
$f \in L^{p_0}(\mathbb{R}^{n})$ we have that $\mathcal{M}_{L}^{0} f \in L^{p_0}(\mathbb{R}^{n})$. So, following the proof of 
\cite[Theorem 3.1]{Pablo}, we obtain (\ref{converg Lp}).
\end{proof}

From Propositions \ref{norm Lpq}, \ref{s-convex} and \ref{abs cont}, \cite[Corollary 3.11]{Sawano}, and since \cite[Remark 3.12]{Sawano} also holds true for any $p_0 \in (1, \infty)$, we have the following result.

\begin{proposition} 
Let $p(\cdot)$ and $q(\cdot) \in \mathbb{P}_{0}(0, \infty)$ such that $q(0) \leq p(0)$ and $p(\infty) \leq q(\infty)$. Then, for any 
$p_0 \in (1, \infty]$, $H^{p(\cdot), q(\cdot)}(\mathbb{R}^n) \cap L^{p_0}(\mathbb{R}^n)$ is dense in $H^{p(\cdot), q(\cdot)}(\mathbb{R}^n)$.
\end{proposition}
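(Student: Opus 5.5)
The density will follow from \cite[Corollary 3.11]{Sawano}, once we check that $X=\mathfrak{L}^{p(\cdot), q(\cdot)}(\mathbb{R}^n)$ satisfies its hypotheses. As I read that result, they are: $X$ is a ball quasi-Banach function space satisfying (\ref{A1}) and (\ref{A2}); $X$ is $s$-concave or has a suitable convexity structure, so that $X^{1/s}$ is a Banach function space; and $X$ has an absolutely continuous quasi-norm. Under these hypotheses the conclusion is that $H_X\cap L^{p_0}$ is dense in $H_X$ for $p_0$ as in \cite[Remark 3.12]{Sawano}.

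First I check the structural hypotheses. By Theorem \ref{Lpq quasi}, $X$ is a ball quasi-Banach function space. By Theorem \ref{Asump A1A2}, (\ref{A1}) and (\ref{A2}) hold with $\theta<s\le 1$ and $s<\min\{1,p_-,q_-\}$.

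Next I check convexity. Fix $s\in(0,\min\{1,p_-,q_-\})$. Then $p(\cdot)/s$ and $q(\cdot)/s$ belong to $\mathbb{P}_1(0,\infty)$, and Lemma \ref{Lpqs} gives $X^{1/s}=\mathfrak{L}^{p(\cdot)/s, q(\cdot)/s}$. Proposition \ref{norm Lpq} makes this norm equivalent to the Banach function norm of $\mathfrak{L}_{p(\cdot)/s,q(\cdot)/s}$. Proposition \ref{s-convex} then gives the $s$-convexity inequality up to a constant for $X$ itself, since $X^{1/s}$ and $(\mathfrak{L}_{p(\cdot),q(\cdot)})^{1/s}$ coincide with equivalent norms.

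Absolute continuity of the quasi-norm is exactly Proposition \ref{abs cont}, and this is where the assumptions $q(0)\le p(0)$ and $p(\infty)\le q(\infty)$ enter.

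The density argument itself, following Sawano et al., runs as follows. Take $f\in H^{p(\cdot),q(\cdot)}$ and its atomic decomposition with $(X,\infty,d)$-atoms. The partial sums are finite sums of bounded compactly supported functions, so they lie in $L^{p_0}$ for every $p_0\in(1,\infty]$. The tail is estimated in $H^{p(\cdot),q(\cdot)}$ by the $X$-quasinorm of
\[
\Big(\sum_{j>N}(\lambda_j/\|\chi_{Q_j}\|_X)^s\chi_{Q_j}\Big)^{1/s}.
\]
This quantity tends to $0$ by absolute continuity together with dominated convergence in $X$. Dominated convergence is available because the full sum lies in $X$ by (\ref{atomic norm}).

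The main obstacle is that \cite[Theorem 3.7]{Sawano} gives the decomposition only for $f$ in $H_X\cap L^{p_0}$ or with $\mathcal{S}'$ convergence. For general $f$ one must use the decomposition valid on all of $H_X$ (\cite[Theorem 3.6]{Sawano}). I will also have to verify that \cite[Remark 3.12]{Sawano}, stated there for particular $p_0$, holds for every $p_0\in(1,\infty]$. This should be fine because each atom lies in $L^\infty$ with compact support, and therefore in every $L^{p_0}$.
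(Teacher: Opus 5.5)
Your proposal is correct and follows the paper's route: the paper invokes \cite[Corollary 3.11]{Sawano}, takes the convexity, absolute continuity and (A1)/(A2) hypotheses from Propositions \ref{norm Lpq}, \ref{s-convex}, \ref{abs cont} and Theorem \ref{Asump A1A2}, and notes that \cite[Remark 3.12]{Sawano} extends to every $p_0$. Your truncation sketch (partial sums of an $(\mathfrak{L}^{p(\cdot),q(\cdot)},\infty,d)$-atomic decomposition lie in every $L^{p_0}$, and the tails are controlled by the reconstruction estimate plus dominated convergence in $X$) just unpacks that corollary; if exact strict $s$-convexity is required, renorm $X$ by the equivalent quasi-norm $\| |f|^s \|_{\mathfrak{L}_{p(\cdot)/s, q(\cdot)/s}}^{1/s}$ instead of carrying the constant.
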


Now, following to \cite{Yan}, we introduce the finite Hardy-Lorentz space with variable exponents 
$H^{p(\cdot), q(\cdot), \widetilde{q}, s}_{fin}(\mathbb{R}^n)$. For them, we need the following definition of atom.

\begin{definition} \label{atom}
Let $p(\cdot)$ and $q(\cdot) \in \mathbb{P}_{0}(0, \infty)$, and $s \in (0, \min\{ 1, p_{-}, q_{-} \})$. For such $s$, let 
$\theta \in (0,s)$ and $\widetilde{q} \in (1, \infty]$ be as in Theorem \ref{Asump A1A2}. Assume that 
$d \in \mathbb{Z}_{+}$ satisfies $d \geq d_{\mathfrak{L}^{p(\cdot), q(\cdot)}}$, where 
$d_{\mathfrak{L}^{p(\cdot), q(\cdot)}} := \lfloor n (1/\theta - 1) \rfloor$. Then the function $a(\cdot)$ is called an 
$(\mathfrak{L}^{p(\cdot), q(\cdot)}, \widetilde{q}, d)$-atom if there exists a ball $B$ such that $\supp(a) \subset B$,
\[
\| a \|_{L^{\widetilde{q}}(\mathbb{R}^n)} \leq \frac{|B|^{1/\widetilde{q}}}{\| \chi_B \|_{\mathfrak{L}^{p(\cdot), q(\cdot)}}},
\]
and $a(\cdot) \perp \mathcal{P}_d$.
\end{definition}

\begin{definition}
Let $p(\cdot)$, $q(\cdot)$, $\widetilde{q}$, $d$ and $s$ be as in Definition \ref{atom}. The finite atomic Hardy-Lorentz space $H^{p(\cdot), q(\cdot), \widetilde{q}, d}_{fin}(\mathbb{R}^n)$, associated to $\mathfrak{L}^{p(\cdot), q(\cdot)}(\mathbb{R}^n)$, is defined to be the set of all finite linear combinations of $({\mathfrak{L}^{p(\cdot), q(\cdot)}}, \widetilde{q}, d)$-atoms. The quasi-norm 
$\| \cdot \|_{H^{p(\cdot), q(\cdot), \widetilde{q}, d}_{fin}(\mathbb{R}^n)}$ in $H^{p(\cdot), q(\cdot), \widetilde{q}, d}_{fin}(\mathbb{R}^n)$ is defined, for any $f \in H^{p(\cdot), q(\cdot), \widetilde{q}, d}_{fin}(\mathbb{R}^n)$, by setting
\[
\| f \|_{H^{p(\cdot), q(\cdot), \widetilde{q}, d}_{fin}(\mathbb{R}^n)} := 
\]
\[
\inf \left\{ \left\| \left\{ \sum_{j=1}^{N} \left( \frac{\lambda_j}{\| \chi_{B_j} \|_{\mathfrak{L}^{p(\cdot), q(\cdot)}}} 
\right)^{s} \chi_{B_j} \right\}^{1/s} \right\|_{\mathfrak{L}^{p(\cdot), q(\cdot)}} : \\ f = \sum_{j=1}^{N} \lambda_j a_{j}, \, \{ \lambda_j \}_{j=1}^{N} \subset [0, \infty) \right\},
\]
where the infimum is taken over all finite linear combinations of $f$ in terms of $({\mathfrak{L}^{p(\cdot), q(\cdot)}}, \widetilde{q}, d)$ -atoms $\{ a_j \}_{j=1}^{N}$ supported, respectively, in the balls $\{ B_j \}_{j=1}^{N}$.
\end{definition}

\begin{theorem} (\cite[Theorem 1.10]{Yan}) \label{equiv quasi-norm}
Let $p(\cdot)$, $q(\cdot)$, $\widetilde{q}$, and $d$ be as in Definition \ref{atom}. If $\widetilde{q} \in (1, \infty)$, then 
$H^{p(\cdot), q(\cdot), \widetilde{q}, d}_{fin}(\mathbb{R}^n) \subset H^{p(\cdot), q(\cdot)}(\mathbb{R}^n)$. Moreover, 
$\| \cdot \|_{H^{p(\cdot), q(\cdot), \widetilde{q}, d}_{fin}(\mathbb{R}^n)}$ and $\| \cdot \|_{H^{p(\cdot), q(\cdot)}(\mathbb{R}^n)}$ are equivalent quasi-norms on the space $H^{p(\cdot), q(\cdot), \widetilde{q}, d}_{fin}(\mathbb{R}^n)$.
\end{theorem}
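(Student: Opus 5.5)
The statement is an instance of \cite[Theorem 1.10]{Yan}. That theorem gives the finite atomic characterization for Hardy spaces associated with a ball quasi-Banach function space $X$. So the plan is to check that $X = \mathfrak{L}^{p(\cdot), q(\cdot)}(\mathbb{R}^n)$ meets each of its hypotheses and then read off the conclusion. No new analysis should be needed; every hypothesis should follow from results already established in Section 2 and in Theorem \ref{Asump A1A2}.

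The hypotheses of \cite[Theorem 1.10]{Yan} are of three kinds.
\begin{itemize}
\item[(a)] $X$ must be a ball quasi-Banach function space. This is Theorem \ref{Lpq quasi}, since $p(\cdot), q(\cdot) \in \mathbb{P}_0(0,\infty)$.
\item[(b)] The Fefferman--Stein vector-valued inequality (\ref{A1}) must hold for some $\theta < s \le 1$. This is the first half of Theorem \ref{Asump A1A2}, valid for any $\theta \in (0, \min\{s, p_-, q_-\})$. The same $\theta$ is the one used to define $d_{\mathfrak{L}^{p(\cdot), q(\cdot)}} = \lfloor n(1/\theta - 1)\rfloor$ in Definition \ref{atom}, so the required lower bound on the moment order $d$ is exactly the one imposed there.
\item[(c)] The convexity assumption must hold together with the boundedness (\ref{A2}) of $M^{((\widetilde{q}/s)')}$ on $(X^{1/s})'$, for $s \in (0, \min\{1,p_-,q_-\})$ and the given $\widetilde{q}$. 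The second half of Theorem \ref{Asump A1A2} gives (\ref{A2}) for $\widetilde{q} \in (\max\{1,p_+,q_+\}, \infty]$. This is the range implicit in Definition \ref{atom} via ``as in Theorem \ref{Asump A1A2}''.
\end{itemize}

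In \cite{Yan} one also needs $X$ to be strictly $s$-convex, meaning that $X^{1/s}$ is a Banach function space. By Lemma \ref{Lpqs}, $X^{1/s} = \mathfrak{L}^{p(\cdot)/s, q(\cdot)/s}(\mathbb{R}^n)$ with the same quasi-norm. For $s < \min\{1,p_-,q_-\}$ we have $p(\cdot)/s, q(\cdot)/s \in \mathbb{P}_1(0,\infty)$. Proposition \ref{norm Lpq} then shows this space equals $\mathfrak{L}_{p(\cdot)/s, q(\cdot)/s}$ with an equivalent norm, and Proposition \ref{s-convex} gives the $s$-triangle inequality up to a constant.

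Once these hypotheses are verified, the theorem is read off directly. Each $(\mathfrak{L}^{p(\cdot), q(\cdot)}, \widetilde{q}, d)$-atom with $\widetilde{q} < \infty$ lies in $H^{p(\cdot), q(\cdot)}(\mathbb{R}^n)$ with uniformly bounded quasi-norm, which gives the inclusion. The equivalence of the two quasi-norms on finite combinations is the content of \cite[Theorem 1.10]{Yan}. The inequality $\|f\|_{H} \lesssim \|f\|_{fin}$ follows from the atomic characterization in \cite{Sawano}, using (\ref{A1}) and (\ref{A2}). The converse comes from the Calderón--Zygmund-type decomposition in \cite{Yan}, which rewrites an infinite decomposition, as in Theorem \ref{X atomic decomp}, as a finite one. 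That step needs $\widetilde{q}<\infty$, through the $L^{\widetilde{q}}$-convergence of the tail.

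The main obstacle is matching conventions rather than the mathematics. Theorem \ref{Asump A1A2} gives the maximal bounds with a constraint linking $s$, $\theta$, and $\widetilde{q}$, and I must check that the parameters in Definition \ref{atom} fall inside the ranges \cite{Yan} requires. There is also a mismatch on strict $s$-convexity: Proposition \ref{s-convex} gives it for the equivalent norm $\mathfrak{L}_{p(\cdot),q(\cdot)}$, not literally for the Luxemburg quasi-norm. To handle this I would replace $\|\cdot\|_{\mathfrak{L}^{p(\cdot),q(\cdot)}}$ by the equivalent quasi-norm $\|\cdot\|_{\mathfrak{L}_{p(\cdot),q(\cdot)}}$ throughout. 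Both $H^{p(\cdot),q(\cdot)}$ and $H_{fin}$ change only by equivalent quasi-norms under this replacement, and \cite[Theorem 1.10]{Yan} can then be applied verbatim.
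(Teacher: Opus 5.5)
Your proposal is correct and follows the paper's route: the paper gives no separate proof, it simply invokes \cite[Theorem 1.10]{Yan}, with the hypotheses supplied by Theorem \ref{Lpq quasi}, Theorem \ref{Asump A1A2} and Propositions \ref{norm Lpq} and \ref{s-convex}, just as you check them. One slip in your last paragraph: the equivalent quasi-norm to place on $X$ is $f \mapsto \| |f|^{s} \|_{\mathfrak{L}_{p(\cdot)/s,\, q(\cdot)/s}}^{1/s}$, which is equivalent to $\|f\|_{\mathfrak{L}^{p(\cdot),q(\cdot)}}$ by Lemma \ref{Lpqs} and Proposition \ref{norm Lpq} and has a Banach $1/s$-convexification; it is not $\|\cdot\|_{\mathfrak{L}_{p(\cdot),q(\cdot)}}$ itself, since Proposition \ref{norm Lpq} requires exponents in $\mathbb{P}_{1}(0,\infty)$, and for $p(\infty)\le 1$ the $f^{**}$-space $\mathfrak{L}_{p(\cdot),q(\cdot)}$ is trivial.
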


\begin{corollary}
Let $p(\cdot)$ and $q(\cdot) \in \mathbb{P}_{0}(0, \infty)$ such that $q(0) \leq p(0)$ and $p(\infty) \leq q(\infty)$, and let $s$, 
$\widetilde{q}$ and $d$ be as in Definition \ref{atom}. Then 
$H^{p(\cdot), q(\cdot), \widetilde{q}, d}_{fin}(\mathbb{R}^n) \subset H^{p(\cdot), q(\cdot)}(\mathbb{R}^n)$ densely. 
\end{corollary}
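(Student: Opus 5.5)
The corollary combines three facts: Theorem \ref{equiv quasi-norm}, which gives the continuous inclusion $H^{p(\cdot), q(\cdot), \widetilde{q}, d}_{fin}(\mathbb{R}^n) \subset H^{p(\cdot), q(\cdot)}(\mathbb{R}^n)$; the density of $H^{p(\cdot), q(\cdot)}(\mathbb{R}^n) \cap L^{p_0}(\mathbb{R}^n)$ in $H^{p(\cdot), q(\cdot)}(\mathbb{R}^n)$ (the preceding proposition, which needs $q(0) \leq p(0)$ and $p(\infty) \leq q(\infty)$); and the infinite atomic decomposition of Theorem \ref{X atomic decomp}. The plan is to approximate $f \in H^{p(\cdot), q(\cdot)}$ first by some $g \in H^{p(\cdot), q(\cdot)} \cap L^{p_0}$, and then to approximate $g$ by truncations of its atomic series, which are finite linear combinations of atoms.

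\textbf{Inclusion.} If $\widetilde{q} \in (1,\infty)$, this is Theorem \ref{equiv quasi-norm}. If $\widetilde{q} = \infty$, an $(\mathfrak{L}^{p(\cdot), q(\cdot)}, \infty, d)$-atom is also an $(\mathfrak{L}^{p(\cdot), q(\cdot)}, r, d)$-atom for any finite $r > \max\{1, p_+, q_+\}$ (Remark \ref{infinite atom}, adapted to balls). So we may reduce to a finite exponent.

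\textbf{Density.} Fix $f \in H^{p(\cdot), q(\cdot)}$ and $\epsilon > 0$. Choose $g \in H^{p(\cdot), q(\cdot)} \cap L^{p_0}$ with $\|f-g\|_{H^{p(\cdot), q(\cdot)}} < \epsilon$, taking $p_0 = \widetilde{q}$ when $\widetilde{q} < \infty$. Apply Theorem \ref{X atomic decomp} to $g$: this gives $g = \sum_j \lambda_j a_j$ in $L^{p_0}$, where the $a_j$ are $\infty$-atoms on cubes $Q_j$ and the coefficients satisfy (\ref{atomic norm}). Each cube $Q_j$ lies in a ball $B_j$ with $|B_j| \approx |Q_j|$ and $\|\chi_{B_j}\|_{\mathfrak{L}^{p(\cdot), q(\cdot)}} \approx \|\chi_{Q_j}\|_{\mathfrak{L}^{p(\cdot), q(\cdot)}}$. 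The second equivalence uses the boundedness of $M$ from Proposition \ref{acot MHL} on the convexification, via $\chi_{B_j} \lesssim (M\chi_{Q_j})^{1/\theta}$ and (\ref{A1}). Hence, up to constants, $\lambda_j a_j$ is a multiple of an $(\mathfrak{L}^{p(\cdot), q(\cdot)}, \widetilde{q}, d)$-atom, and $g_N := \sum_{j \leq N} \lambda_j a_j$ belongs to $H_{fin}$. It remains to show $\|g - g_N\|_{H^{p(\cdot), q(\cdot)}} \to 0$.

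\textbf{Main obstacle: the tail estimate.} The tail $g - g_N = \sum_{j>N} \lambda_j a_j$ converges in $L^{p_0}$, hence in $\mathcal{S}'$. The standard atomic estimate (the proof of \cite[Theorem 3.7]{Sawano}, with the bounds in (\ref{A1}) and (\ref{A2})) then gives
\[
\|g - g_N\|_{H^{p(\cdot), q(\cdot)}} \lesssim \left\| \left\{ \sum_{j>N} \left( \frac{\lambda_j}{\| \chi_{Q_j} \|_{\mathfrak{L}^{p(\cdot), q(\cdot)}}} \right)^s \chi_{Q_j} \right\}^{1/s} \right\|_{\mathfrak{L}^{p(\cdot), q(\cdot)}}.
\]
The right-hand side tends to zero, which is where absolute continuity enters. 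The full sum $F = \{\sum_j (\cdots)^s \chi_{Q_j}\}^{1/s}$ belongs to $\mathfrak{L}^{p(\cdot), q(\cdot)}$ by (\ref{atomic norm}), and the tails $F_N$ satisfy $F_N \downarrow 0$ pointwise with $F_N \leq F$. Repeating the argument of Proposition \ref{abs cont}, namely $F_N^* \to 0$ (since $|\{F > s\}| < \infty$), dominated convergence for $\rho_{p(\cdot), q(\cdot)}$, and Lemma \ref{converg a cero}, gives $\|F_N\|_{\mathfrak{L}^{p(\cdot), q(\cdot)}} \to 0$. Combining this with the quasi-triangle inequality of Theorem \ref{Lpq quasi}(iii) yields $\|f - g_N\|_{H^{p(\cdot), q(\cdot)}} \lesssim \epsilon$ for large $N$, which proves density.
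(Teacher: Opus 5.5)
Your proof is correct and is essentially the paper's argument written out. The paper simply cites Sawano et al.'s Theorems 3.6 and 3.7 (with balls in place of cubes) together with their Corollary 3.11(ii). Those results amount to your scheme: truncate the atomic decomposition, then kill the tail with the atomic reconstruction estimate and the absolute continuity from Proposition \ref{abs cont}. Your explicit cube-to-ball conversion via (\ref{A1}) fills in what the paper leaves implicit. The detour through $H^{p(\cdot), q(\cdot)}(\mathbb{R}^n) \cap L^{p_0}(\mathbb{R}^n)$ is harmless but unnecessary, since the decomposition already converges in $\mathcal{S}'(\mathbb{R}^n)$ for every $f \in H^{p(\cdot), q(\cdot)}(\mathbb{R}^n)$. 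Also, the tail bound you invoke is the reconstruction half of the atomic characterization (Theorem 3.6 there), not the proof of Theorem 3.7.
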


\begin{proof} By Propositions \ref{norm Lpq}, \ref{s-convex} and \ref{abs cont}, the corollary follows from 
\cite[Theorems 3.6 and 3.7]{Sawano} (which also hold with balls instead of cubes), and \cite[Corollary 3.11 - (ii)]{Sawano}.
\end{proof}

\section{Estimates for classical operators}

In this section, we apply \cite[Theorem 28 and 29]{Rocha} and \cite[Theorem 31]{Rocha2} to establish the boundedness of singular 
integrals and fractional type operators in $H^{p(\cdot), q(\cdot)}(\mathbb{R}^n)$. By means of \cite[Proposition 21]{Rocha}, 
we also provide a Fefferman-Stein vector-valued inequality for the fractional maximal operator on the $r$-convexification of 
$\mathfrak{L}^{p(\cdot), q(\cdot)}(\mathbb{R}^n)$.

\subsection{Singular integrals} Let $\Omega \in C^{\infty}(S^{n-1})$ with $\int_{S^{n-1}} \Omega(u) d\sigma(u)=0$. We define the operator $T$ by
\begin{equation} \label{T sing}
Tf(x) = \lim_{\epsilon \rightarrow 0^{+}} \int_{|y| > \epsilon} \frac{\Omega(y/|y|)}{|y|^{n}} f(x-y) \, dy, \,\,\,\, x \in \mathbb{R}^{n}. 
\end{equation}
It is well known that $\widehat{Tf}(\xi) = m(\xi) \widehat{f}(\xi)$, where the multiplier $m$ is homogeneous of degree $0$ and is indefinitely diferentiable on $\mathbb{R}^{n} \setminus \{0\}$. Moreover, if $k(y) = \frac{\Omega(y/|y|)}{|y|^{n}}$ we have
\begin{equation} \label{k estimate}
|\partial^{\beta} k(y) |\leq C |y|^{-n-|\beta|}, \,\,\,\,  \textit{for all } \,\, y \neq 0 \,\, \textit{and all multi-index} \,\, \beta. 
\end{equation}
The operator $T$ results bounded on $L^{p}(\mathbb{R}^{n})$ for every $1 < p < +\infty$ (see \cite{Elias}).

\begin{theorem}  \label{Sing estimates}
Let $p(\cdot)$ and $q(\cdot) \in \mathbb{P}_{0}(0, \infty)$ such that $q(0) \leq p(0)$ and $p(\infty) \leq q(\infty)$.
Then the singular operator $T$ given by (\ref{T sing}) extends to a bounded operator 
$H^{p(\cdot), q(\cdot)}(\mathbb{R}^n) \to \mathfrak{L}^{p(\cdot), q(\cdot)}(\mathbb{R}^n)$ and 
$H^{p(\cdot), q(\cdot)}(\mathbb{R}^n) \to H^{p(\cdot), q(\cdot)}(\mathbb{R}^n)$.
\end{theorem}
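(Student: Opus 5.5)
The plan is to derive the result from the abstract theorem in \cite[Theorems 28 and 29]{Rocha}. That theorem treats singular integrals on Hardy spaces $H_X$ attached to a ball quasi-Banach function space $X$. Its hypotheses are assumptions (\ref{A1}) and (\ref{A2}), together with density of a good subclass and an absolutely continuous quasi-norm. So the proof is mostly a matter of checking these hypotheses for $X=\mathfrak{L}^{p(\cdot), q(\cdot)}(\mathbb{R}^n)$.

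First, by Theorem \ref{Lpq quasi}, $X$ is a ball quasi-Banach function space. By Theorem \ref{Asump A1A2}, (\ref{A1}) holds for $s\in(0,1]$ and $\theta\in(0,\min\{s,p_-,q_-\})$, and (\ref{A2}) holds for $s\in(0,\min\{1,p_-,q_-\})$ and $\widetilde q\in(\max\{1,p_+,q_+\},\infty]$. Second, the hypotheses $q(0)\le p(0)$ and $p(\infty)\le q(\infty)$ give us:
\begin{itemize}
\item the absolute continuity of Proposition \ref{abs cont};
\item the density of $H^{p(\cdot),q(\cdot)}\cap L^{p_0}$ and of $H^{p(\cdot),q(\cdot),\widetilde q,d}_{fin}$ in $H^{p(\cdot),q(\cdot)}(\mathbb{R}^n)$, by the density proposition and corollary of Section 3.
\end{itemize}

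The core estimate is then done on finite atomic sums. Take $f=\sum_{j=1}^N\lambda_j a_j$ with $(\mathfrak{L}^{p(\cdot),q(\cdot)},\widetilde q,d)$-atoms supported in balls $B_j$, and write
\[
|Tf|\le \sum_j \lambda_j |Ta_j|\chi_{2B_j}+\sum_j\lambda_j|Ta_j|\chi_{(2B_j)^c}.
\]
\emph{Local part.} We use the $L^{\widetilde q}$-boundedness of $T$ and the size condition of the atoms. Then (\ref{A2}) and the duality of Theorem \ref{dual} (the standard lemma of Sawano et al.\ for $L^{\widetilde q}$ pieces) bound its $X$-quasinorm by $\|\{\sum_j(\lambda_j/\|\chi_{B_j}\|_X)^s\chi_{B_j}\}^{1/s}\|_X$.

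\emph{Tail part.} We use the vanishing moments up to order $d$, Taylor expansion of $k$, and (\ref{k estimate}). This gives
\[
|Ta_j(x)|\lesssim \|\chi_{B_j}\|_X^{-1}\,\bigl(M\chi_{B_j}(x)\bigr)^{(n+d+1)/n}\quad\text{for } x\notin 2B_j.
\]
Since $d\ge\lfloor n(1/\theta-1)\rfloor$, we have $(n+d+1)/n>1/\theta$, so this is controlled by $(M^{(\theta)}\chi_{B_j})^{1/\theta}$-type terms. Applying (\ref{A1}) bounds the tail by the same atomic quantity.

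By Theorem \ref{equiv quasi-norm} this quantity is $\lesssim\|f\|_{H^{p(\cdot),q(\cdot)}}$, and density plus absolute continuity (Corollary \ref{simple funct}) extend $T$ to all of $H^{p(\cdot),q(\cdot)}$, which gives the bound into $\mathfrak{L}^{p(\cdot),q(\cdot)}$. For the bound into $H^{p(\cdot),q(\cdot)}$, we show that $Ta_j$ is a molecule. It has vanishing moments because $m$ is smooth away from the origin and $\widehat{a_j}$ vanishes to order $d$ at $0$. Then $\mathcal{M}^0_L(Ta_j)$ splits in the same way, local plus a decay tail, and the same argument applies.

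The main obstacle is the tail estimate for $\mathcal{M}^0_L(Ta_j)$: we need enough decay, of order $|x|^{-n-d-1}$, to absorb into (\ref{A1}). I would first try the molecular decay estimate of \cite[Theorem 29]{Rocha} directly rather than redoing it here.
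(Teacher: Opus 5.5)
Your proposal is correct and takes the paper's route. The paper's proof only checks the hypotheses of \cite[Theorem 28]{Rocha} and then applies it: (\ref{A1})--(\ref{A2}) via Theorem \ref{Asump A1A2}, the boundedness of $M$ on $((\mathfrak{L}^{p(\cdot),q(\cdot)})^{1/p_0})'$, and the absolute continuity from Proposition \ref{abs cont}. Your local/tail and molecular sketch is the content of that cited theorem.

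Two small points. You leave implicit the hypothesis supplied by Propositions \ref{norm Lpq} and \ref{s-convex}, namely that $(\mathfrak{L}^{p(\cdot),q(\cdot)})^{1/s}$ is a Banach function space. Also, only \cite[Theorem 28]{Rocha} is needed here; the paper uses \cite[Theorem 29]{Rocha} for the Riesz potential.
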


\begin{proof}
By Theorem \ref{Asump A1A2}, Propositions \ref{norm Lpq} and \ref{s-convex}, Propostition \ref{abs cont}, and since $M$ is bounded on 
$((\mathfrak{L}^{p(\cdot), q(\cdot)}(\mathbb{R}^n))^{1/p_0})'$ for any $p_0 \in (0, \min\{ 1, p_{-}, q_{-}\})$ (see Theorem \ref{dual} and Proposition \ref{acot MHL}), we can apply \cite[Theorem 28]{Rocha} and the theorem follows.
\end{proof}

\subsection{Fractional maximal operator} For $0 < \alpha < n$, we define the \textit{fractional maximal operator} $M_{\alpha}$ by
\[
(M_{\alpha}f)(x) = \sup_{Q \ni x} |Q|^{\frac{\alpha}{n} - 1}\int_{Q} |f(y)| \, dy,
\]
where $f$ is a locally integrable function on $\mathbb{R}^{n}$ and the supremum is taken over all cubes $Q$ containing $x$.

\begin{proposition} \label{vector-valued fract}
Let $0 < \alpha < n$, $r \in (1, \infty)$, $p(\cdot)$ and $q(\cdot) \in \mathbb{P}_{0}(0, \infty)$ such that 
$0 < p_{+}, \,\, q_{+} < \frac{n}{\alpha}$. If $p_0 \in (0, \min\{ \frac{n}{n+\alpha}, p_{-}, q_{-}\})$, 
$\frac{1}{u(t)} : = \frac{1}{p(t)} - \frac{\alpha}{n}$ and $\frac{1}{v(t)} : = \frac{1}{q(t)} - \frac{\alpha}{n}$, then for 
any $\sigma > 1/p_0$ and any sequence of measurable functions $\{ f_j \}_{j=1}^{\infty}$,
\begin{equation} \label{feff-stein fract max}
\left\| \left\{ \sum_{j=1}^{\infty} (M_{\frac{\alpha}{\sigma}} f_j)^{r} \right\}^{1/r} \right\|_{\mathfrak{L}^{\sigma u(\cdot), 
\sigma v(\cdot)}(\mathbb{R}^n)} \lesssim \left\| \left\{ \sum_{j=1}^{\infty} |f_j|^{r} \right\}^{1/r} \right\|_{\mathfrak{L}^{\sigma p(\cdot), 
\sigma q(\cdot)}(\mathbb{R}^n)}.
\end{equation}
\end{proposition}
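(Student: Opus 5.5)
The plan is to deduce (\ref{feff-stein fract max}) from the abstract vector-valued inequality for $M_\alpha$ on convexifications of ball quasi-Banach function spaces, namely \cite[Proposition 21]{Rocha}, as announced at the start of this section. That result, as I understand it, requires two inputs. First, a pair of ball quasi-Banach function spaces $X$, $Y$ with the Hardy--Littlewood--Sobolev type relation $Y = X$ "shifted by $\alpha/n$" at level $p_0$. Second, the boundedness of $M$ on the Köthe duals $((X)^{1/p_0})'$ and $((Y)^{1/p_0})'$, or on suitable convexifications of them. The conclusion is the inequality for $M_{\alpha/\sigma}$ from $X^{\sigma}$ to $Y^{\sigma}$ for $\sigma>1/p_0$. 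We therefore take $X=\mathfrak{L}^{p(\cdot),q(\cdot)}(\mathbb{R}^n)$ and $Y=\mathfrak{L}^{u(\cdot),v(\cdot)}(\mathbb{R}^n)$. By Lemma \ref{Lpqs}, $X^{\sigma}=\mathfrak{L}^{\sigma p(\cdot),\sigma q(\cdot)}$ and $Y^{\sigma}=\mathfrak{L}^{\sigma u(\cdot),\sigma v(\cdot)}$, so the target is exactly of that form.

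First I verify the exponent hypotheses. Since $p_+,q_+<n/\alpha$, we have $u,v\in\mathcal P_0(0,\infty)$, with $u_-=\frac{np_-}{n-\alpha p_-}\ge p_-$ and $u_+<\infty$, and similarly for $v$. The log-decay conditions at $0$ and $\infty$ transfer from $p,q$ to $u,v$ because $t\mapsto \frac{nt}{n-\alpha t}$ is Lipschitz on $[p_-,p_+]$. Hence $u,v\in\mathbb P_0(0,\infty)$, and $u_-,v_->\min\{p_-,q_-\}$, so $p_0<\min\{1,u_-,v_-\}$ as well. Next, for $w\in\{p,q,u,v\}$ the exponents $w/p_0$ lie in $\mathbb P_1(0,\infty)$. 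Theorem \ref{dual} then identifies $(X^{1/p_0})'$ and $(Y^{1/p_0})'$ with variable Lorentz spaces with exponents $(p/p_0)',(q/p_0)'$, respectively $(u/p_0)',(v/p_0)'$. These are again in $\mathbb P_1$, since conjugation preserves the log conditions when exponents are bounded away from $1$ and $\infty$. Proposition \ref{acot MHL} then gives boundedness of $M$ on them. The Fefferman--Stein ingredient on $X$ and $Y$ is supplied by Proposition \ref{Feff-Stein 1}.

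The key structural input, and the main obstacle, is the scalar off-diagonal estimate linking $X^{1/p_0}$ and $Y^{1/p_0}$: boundedness of $M_{\alpha p_0}$ (or of the Riesz potential) from $\mathfrak{L}^{p/p_0,q/p_0}$ to $\mathfrak{L}^{u/p_0,v/p_0}$. The constraint $p_0<n/(n+\alpha)$ is presumably what \cite[Proposition 21]{Rocha} needs for its extrapolation/duality argument. Note that $\frac{p_0}{u}=\frac{p_0}{p}-\frac{\alpha p_0}{n}$, so the relevant order is $\alpha p_0<n$. I would obtain this scalar estimate from the Ephremidze--Kokilashvili--Samko result on fractional operators in $\mathfrak{L}^{p(\cdot),q(\cdot)}$ \cite{Ephre}. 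The standard route is through O'Neil's rearrangement inequality $(M_\beta f)^*(t)\lesssim \sup_{s\ge t}s^{\beta/n}f^{**}(s)$, or $\lesssim t^{\beta/n-1}\int_0^t f^*+\int_t^\infty s^{\beta/n}f^*(s)\frac{ds}{s}$. This is followed by variable-exponent Hardy inequalities on $(0,\infty)$, which hold under the $\mathbb P$ log conditions via Remark \ref{equivalente}: there the weights freeze to $t^{q(0)/p(0)-1}$ near $0$ and $t^{q(\infty)/p(\infty)-1}$ near $\infty$. I expect checking that the Hardy inequalities apply with the frozen exponents, and that the correct exponent relation ($\frac1u=\frac1p-\frac\alpha n$ and $\frac1v=\frac1q-\frac\alpha n$, so that $\frac1u-\frac1v=\frac1p-\frac1q$) makes the weights match, to be the delicate computation.

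Once these hypotheses are verified, \cite[Proposition 21]{Rocha} applies with $\sigma>1/p_0$ and gives (\ref{feff-stein fract max}), with $M_{\alpha/\sigma}$ appearing because $M_{\alpha}(|f|^{1/\sigma\cdot\sigma})$ rescales: $(M_{\alpha}|f|^{1/\sigma'})$-type identities convert $M_{\alpha/\sigma}$ on $X^{\sigma}$ into $M_\alpha$ on $X$ via Lemma \ref{Lpqs}.
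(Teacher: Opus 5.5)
You have the paper's overall plan, applying \cite[Proposition 21]{Rocha} to $X=\mathfrak{L}^{p(\cdot),q(\cdot)}(\mathbb{R}^n)$ and $Y=\mathfrak{L}^{u(\cdot),v(\cdot)}(\mathbb{R}^n)$. However, you have guessed that proposition's hypotheses incorrectly, so the hypothesis that actually carries the argument is never stated or checked. The link between $X$ and $Y$ it uses is not a scalar off-diagonal bound for $M_{\alpha p_0}$ or $I_\alpha$ at the single level $p_0$. It is an identity between K\"othe duals of convexifications at two different levels: with $\frac{1}{q_0}:=\frac{1}{p_0}-\frac{\alpha}{n}$,
\[
\left((\mathfrak{L}^{u(\cdot),v(\cdot)})^{1/q_0}\right)'=\left[\left((\mathfrak{L}^{p(\cdot),q(\cdot)})^{1/p_0}\right)'\right]^{p_0/q_0},
\]
together with boundedness of $M$ on $(Y^{1/q_0})'$, rather than on $(X^{1/p_0})'$ and $(Y^{1/p_0})'$. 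This is the typical input of an off-diagonal extrapolation argument, and the paper's proof uses no estimate for fractional operators on variable Lorentz spaces. You never introduce $q_0$, which is why the condition $p_0<\frac{n}{n+\alpha}$ seemed unexplained to you: it is exactly the requirement $q_0<1$. Likewise, $p_0<\min\{p_-,q_-\}$ is exactly $q_0<\min\{u_-,v_-\}$, which gives $u(\cdot)/q_0,\,v(\cdot)/q_0\in\mathbb{P}_1(0,\infty)$, so Theorem \ref{dual} and Proposition \ref{acot MHL} apply to $Y^{1/q_0}$.

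The missing identity is a short computation with tools you already use. By Lemma \ref{Lpqs} and Theorem \ref{dual}, the left-hand side is $\mathfrak{L}^{(u(\cdot)/q_0)',\,(v(\cdot)/q_0)'}$ and the right-hand side is $\mathfrak{L}^{\frac{p_0}{q_0}(p(\cdot)/p_0)',\,\frac{p_0}{q_0}(q(\cdot)/p_0)'}$. The exponents agree because $1-\frac{q_0}{u(t)}=q_0\bigl(\frac{1}{p_0}-\frac{1}{p(t)}\bigr)$, i.e.\ $\frac{1}{q_0}-\frac{1}{u(t)}=\frac{1}{p_0}-\frac{1}{p(t)}$, and likewise for $v$ and $q$. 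The step you single out as the main obstacle (O'Neil's inequality plus variable-exponent Hardy inequalities) is left unfinished. Even if completed, it would not supply the hypothesis that, judging from the paper's proof, the cited proposition requires. Your closing remark is also off: there is no identity converting $M_{\alpha/\sigma}$ on $X^{\sigma}$ into $M_\alpha$ on $X$, since H\"older only gives $M_{\alpha/\sigma}f\le (M_\alpha |f|^{\sigma})^{1/\sigma}$ for $\sigma\ge 1$. None is needed either, because the cited proposition gives the estimate for $M_{\alpha/\sigma}$ from $X^{\sigma}=\mathfrak{L}^{\sigma p(\cdot),\sigma q(\cdot)}$ to $Y^{\sigma}=\mathfrak{L}^{\sigma u(\cdot),\sigma v(\cdot)}$ directly. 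As written, the argument has a genuine gap at its only nontrivial hypothesis.
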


\begin{proof}
Let $0 < \alpha < n$, for any $p_0 \in (0, \min\{ \frac{n}{n+\alpha}, p_{-}, q_{-}\})$ we put 
$\frac{1}{q_0} := \frac{1}{p_0} - \frac{\alpha}{n}$, then $0 < p_0 \leq q_0 \leq 1$ 
and $q_0 \in (0, \min\{1, u_{-}, v_{-}\})$. Applying Theorem \ref{dual}, we obtain
\begin{equation} \label{XYp0q0}
\left((\mathfrak{L}^{u(\cdot), v(\cdot)}(\mathbb{R}^n))^{1/q_0} \right)' = 
\left[\left((\mathfrak{L}^{p(\cdot), q(\cdot)}(\mathbb{R}^n))^{1/p_0} \right)' \right]^{p_0/q_0}.
\end{equation}
On the other hand, by Proposition \ref{acot MHL}, the Hardy-Littlewood maximal operator $M$ is bounded on 
$\left((\mathfrak{L}^{u(\cdot), v(\cdot)}(\mathbb{R}^n))^{1/q_0} \right)'$. Then, (\ref{feff-stein fract max}) follows from 
\cite[Proposition 21]{Rocha}.
\end{proof}

\subsection{Riesz potentials} Given $0 < \alpha < n$, the Riesz potential $I_{\alpha}$ on $\mathbb{R}^{n}$ is defined by
\begin{equation} \label{Riesz}
(I_{\alpha}f)(x) = \int_{\mathbb{R}^{n}} |x-y|^{\alpha - n} f(y) \, dy.
\end{equation}
It is clear that $K_{\alpha}(\cdot) := |\cdot|^{\alpha - n} \in C^{\infty}(\mathbb{R}^{n} \setminus \{0\}) = \displaystyle{\bigcap_{N \in \mathbb{N}}} C^{N}(\mathbb{R}^{n} \setminus \{0\})$ and for every $N \in \mathbb{N}$ satisfies
\[
\left|(\partial^{\beta}K_{\alpha})(x) \right| \lesssim |x|^{\alpha - n - |\beta|} \,\,\,\, \text{for all} \,\, |\beta| \leq N \,\,\, \text{and all} \,\, x \neq 0.
\]
Then, Theorem \ref{Asump A1A2}, Propositions \ref{norm Lpq} and \ref{s-convex}, Proposition \ref{abs cont}, (\ref{XYp0q0}) and Proposition \ref{acot MHL} together with 
\cite[Theorem 29]{Rocha} allow us to obtain the following result.

\begin{theorem}  \label{Riesz estimates}
Let $0 < \alpha < n$, $p(\cdot)$ and $q(\cdot) \in \mathbb{P}_{0}(0, \infty)$ such that $0 < p_{+}, \,\, q_{+} < \frac{n}{\alpha}$, 
$q(0) \leq p(0)$ and $p(\infty) \leq q(\infty)$. 
Then, for $\frac{1}{u(t)} : = \frac{1}{p(t)} - \frac{\alpha}{n}$ and $\frac{1}{v(t)} : = \frac{1}{q(t)} - \frac{\alpha}{n}$, 
the Riesz potential $I_{\alpha}$ given by (\ref{Riesz}) extends to a bounded operator 
$H^{p(\cdot), q(\cdot)}(\mathbb{R}^n) \to \mathfrak{L}^{u(\cdot), v(\cdot)}(\mathbb{R}^n)$ and 
$H^{p(\cdot), q(\cdot)}(\mathbb{R}^n) \to H^{u(\cdot), v(\cdot)}(\mathbb{R}^n)$.
\end{theorem}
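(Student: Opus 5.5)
The plan is to verify, one by one, the hypotheses of \cite[Theorem 29]{Rocha} for the pair $X = \mathfrak{L}^{p(\cdot), q(\cdot)}(\mathbb{R}^n)$ and $Y = \mathfrak{L}^{u(\cdot), v(\cdot)}(\mathbb{R}^n)$. Then I will appeal to that theorem, exactly as Theorem \ref{Sing estimates} was deduced from \cite[Theorem 28]{Rocha}. The kernel $K_\alpha$ already satisfies the required smoothness and size estimates $|\partial^\beta K_\alpha(x)| \lesssim |x|^{\alpha-n-|\beta|}$, as noted before the statement. So the work reduces to checking the structural assumptions on $X$ and $Y$.

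\textbf{Step 1: exponents.} First I check that $u(\cdot), v(\cdot) \in \mathbb{P}_0(0,\infty)$. Since $p_+, q_+ < n/\alpha$, the functions $1/u = 1/p - \alpha/n$ and $1/v = 1/q-\alpha/n$ are bounded below by a positive constant, and of course bounded above. Hence $u_-, u_+, v_-, v_+ \in (0,\infty)$. The limits $u(0), u(\infty)$ exist. The log-decay conditions transfer, because $|u(t)-u(0)| = u(t)u(0)\,|1/p(t)-1/p(0)| \lesssim |p(t)-p(0)|$. The same argument applies to $v$ and at $\infty$. Moreover, $q(0)\le p(0)$ gives $1/v(0) \ge 1/u(0)$, that is $v(0)\le u(0)$, and likewise $u(\infty)\le v(\infty)$.

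\textbf{Step 2: structure of $X$ and $Y$.} With the exponents in hand, Theorem \ref{Asump A1A2} gives (\ref{A1}) and (\ref{A2}) for both $X$ and $Y$. Propositions \ref{norm Lpq} and \ref{s-convex} give $s$-convexity, with a Banach-function-space renorming of a suitable convexification. Proposition \ref{abs cont} gives absolute continuity of the quasi-norms of both $X$ and $Y$, using the inequalities from Step 1.

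\textbf{Step 3: the key duality relation.} Fix $p_0 \in (0,\min\{\frac{n}{n+\alpha}, p_-, q_-\})$ and set $1/q_0 = 1/p_0 - \alpha/n$. I will establish (\ref{XYp0q0}) by applying Theorem \ref{dual} twice. Its conjugate exponents are $(p/p_0)'$ and $(u/q_0)'$, and an elementary computation shows $1/(u/q_0)' = 1 - q_0/p + q_0\alpha/n = (p_0/q_0)\,[1 - p_0/p]$. This means $(u/q_0)' = (q_0/p_0)(p/p_0)'$, and the same holds with $v, q$ in place of $u, p$. By Lemma \ref{Lpqs}, the two associate spaces then differ exactly by the $p_0/q_0$-convexification. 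Finally, Proposition \ref{acot MHL} applied to $\mathfrak{L}^{(u/q_0)', (v/q_0)'}$ gives boundedness of $M$ on $((Y)^{1/q_0})'$; its exponents lie in $\mathbb{P}_1$ because $q_0 < \min\{u_-, v_-\}$.

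\textbf{Step 4: conclusion.} With all of this, \cite[Theorem 29]{Rocha} yields both the bound $H^{p(\cdot),q(\cdot)} \to \mathfrak{L}^{u(\cdot),v(\cdot)}$ and the bound $H^{p(\cdot),q(\cdot)} \to H^{u(\cdot),v(\cdot)}$. The extension from the dense atomic (finite or $L^{p_0}$) subclass to the whole space uses the density results from Section 3, which need $q(0)\le p(0)$ and $p(\infty)\le q(\infty)$.

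I expect the main obstacle to be Step 3. There the equivalences coming from Theorem \ref{dual} are only up to constants, so the identification (\ref{XYp0q0}) holds only up to equivalent quasi-norms. I must confirm that this is all \cite[Theorem 29]{Rocha} needs. I must also check that the log-conditions on $(u/q_0)'$ and $(v/q_0)'$ hold, so that Proposition \ref{acot MHL} applies.
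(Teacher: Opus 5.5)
Your overall route is the paper's. The paper proves this theorem simply by noting that Theorem \ref{Asump A1A2}, Propositions \ref{norm Lpq}, \ref{s-convex} and \ref{abs cont}, identity (\ref{XYp0q0}) and Proposition \ref{acot MHL} supply the hypotheses of \cite[Theorem 29]{Rocha}, which is exactly the checklist in your Steps 1--4. Your Step 1 ($u(\cdot),v(\cdot)\in\mathbb{P}_0(0,\infty)$, $v(0)\le u(0)$, $u(\infty)\le v(\infty)$) is left implicit in the paper, and it is correct. The two worries in your last paragraph are harmless:
\begin{itemize}
\item the paper itself uses (\ref{XYp0q0}) only up to equivalent quasi-norms, since Theorem \ref{dual} gives nothing more;
\item the log-conditions pass to $w:=(u/q_0)'$. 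Indeed $1/w=1-q_0/u$ and $w$ is bounded because $q_0<u_-$, so $|w(t)-w(0)|\lesssim |u(t)-u(0)|$, and similarly at $\infty$.
\end{itemize}

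There is, however, an algebra error in Step 3: the second equality in your display is false. Since $1/p_0=1/q_0+\alpha/n$, you have $q_0/p_0=1+q_0\alpha/n$, hence
\[
\frac{1}{(u/q_0)'}=1-\frac{q_0}{p}+\frac{q_0\alpha}{n}=\frac{q_0}{p_0}-\frac{q_0}{p}=\frac{q_0}{p_0}\Bigl(1-\frac{p_0}{p}\Bigr).
\]
So the factor is $q_0/p_0$, not $p_0/q_0$. For example, $n=1$, $\alpha=1/2$, $p_0=1/2$, $q_0=2/3$, $p\equiv 1$ gives $2/3$ on the left and $3/8$ for your right-hand side. Consequently $(u/q_0)'=(p_0/q_0)(p/p_0)'$, not $(q_0/p_0)(p/p_0)'$.

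This matters for the direction of the convexification. By Lemma \ref{Lpqs} and Definition \ref{r-convexification}, $(\mathfrak{L}^{a(\cdot),b(\cdot)})^{r}=\mathfrak{L}^{ra(\cdot),rb(\cdot)}$. Your formula as written would therefore give $(Y^{1/q_0})'=((X^{1/p_0})')^{q_0/p_0}$, which is not (\ref{XYp0q0}), since $q_0/p_0>1$ for $\alpha>0$. With the corrected relation, and the same one for $v,q$, you get exactly $(Y^{1/q_0})'=((X^{1/p_0})')^{p_0/q_0}$. That is the statement you asserted, though it did not follow from your formula. Once this line is fixed, the proof goes through as in the paper.
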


\subsection{Fractional type integrals} Let $0 \leq \alpha <n$ and $m \in \mathbb{N} \cap \left(1 - \frac{\alpha}{n}, +\infty \right)$, we consider the following generalization of the Riesz potential
\begin{equation} \label{T}
T_{\alpha, m}f(x)=\int_{\mathbb{R}^{n}} \left\vert x-A_{1}y\right\vert ^{-\alpha
_{1}}...\left\vert x-A_{m}y\right\vert ^{-\alpha _{m}}f(y)dy, 
\end{equation}
where $\alpha _{1}+...+\alpha _{m}=n-\alpha$, and the $A_j$'s are $n \times n$ orthogonal matrices. For the case $\alpha = 0$, we assume 
that $A_i - A_j$ is invertible if $i \neq j$. The family of operators in (\ref{T}) contains to the Riesz potential. Indeed, taking $0 < \alpha < n$, $m=1$ and $A_1 = I$ in (\ref{T}), we have that $I_\alpha = T_{\alpha, 1}$. Our last result is contained in the following theorem.

\begin{theorem}
Let $0 \leq \alpha < n$, $m \in \mathbb{N} \cap \left(1 - \frac{\alpha}{n}, +\infty \right)$, $p(\cdot)$ and 
$q(\cdot) \in \mathbb{P}_{0}(0, \infty)$ such that $0 < p_{+}, \,\, q_{+} < \frac{n}{\alpha}$, $q(0) \leq p(0)$ and 
$p(\infty) \leq q(\infty)$. Then, for $\frac{1}{u(t)} : = \frac{1}{p(t)} - \frac{\alpha}{n}$ and 
$\frac{1}{v(t)} : = \frac{1}{q(t)} - \frac{\alpha}{n}$, the operator $T_{\alpha, m}$ given by (\ref{T}) extends to a bounded operator 
$H^{p(\cdot), q(\cdot)}(\mathbb{R}^n) \to \mathfrak{L}^{u(\cdot), v(\cdot)}(\mathbb{R}^n)$.
\end{theorem}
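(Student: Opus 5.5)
The plan is to follow the same template as Theorems \ref{Sing estimates} and \ref{Riesz estimates}: check the abstract hypotheses on $X=\mathfrak{L}^{p(\cdot),q(\cdot)}(\mathbb{R}^n)$ and $Y=\mathfrak{L}^{u(\cdot),v(\cdot)}(\mathbb{R}^n)$ required by \cite[Theorem 31]{Rocha2}, the result proved for $T_{\alpha,m}$ in the setting of ball quasi-Banach function spaces, and then apply it directly.

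\textbf{Lattice and density hypotheses.} First, $u(\cdot),v(\cdot)\in\mathbb{P}_0(0,\infty)$. Indeed, $p_+,q_+<n/\alpha$ gives $u_+,v_+<\infty$, and the log-decay conditions at $0$ and $\infty$ transfer, because $t\mapsto (1/t-\alpha/n)^{-1}$ is Lipschitz on $[p_-,p_+]$. Also $q(0)\le p(0)$ implies $v(0)\le u(0)$, and likewise at $\infty$, since this map is increasing. Hence Theorem \ref{Lpq quasi}, Proposition \ref{abs cont}, and Propositions \ref{norm Lpq} and \ref{s-convex} hold for both $X$ and $Y$. Together with Theorem \ref{Asump A1A2}, this gives:
\begin{itemize}
\item the atomic decomposition (Theorem \ref{X atomic decomp});
\item the density of $H^{p(\cdot),q(\cdot)}\cap L^{p_0}$ or of finite atomic combinations;
\item the $s$-convexity needed to sum atom estimates.
\end{itemize}

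\textbf{Duality hypothesis.} Next, fix $p_0\in(0,\min\{\frac{n}{n+\alpha},p_-,q_-\})$ and set $1/q_0=1/p_0-\alpha/n$. Then identity (\ref{XYp0q0}) holds by Theorem \ref{dual}. By Proposition \ref{acot MHL}, $M$ is bounded on $((Y)^{1/q_0})'$ and on $(X^{1/p_0})'$. This is precisely the duality/maximal hypothesis used in \cite{Rocha2}.

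\textbf{Kernel hypothesis.} The remaining input is the off-support estimate for $T_{\alpha,m}a$ when $a$ is an atom on a cube $Q$. For $x$ outside a fixed dilate of $\bigcup_j A_jQ$, expanding the kernel in Taylor series in $y$ and using the vanishing moments of order $d$ gives
\[
|T_{\alpha,m}a(x)|\lesssim \frac{\ell(Q)^{n+d+1}}{\|\chi_Q\|_{X}}\sum_{j=1}^m |x-A_jz_Q|^{\alpha-n-d-1}.
\]
This is dominated by $\|\chi_Q\|_X^{-1}\sum_j\big(M_{\alpha n/(n+d+1)}\chi_{A_jQ}(x)\big)^{(n+d+1)/n}$. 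Since the $A_j$ are orthogonal, the maps $f\mapsto f_{A}$ preserve rearrangements, so $\|\chi_{A_jQ}\|_X=\|\chi_Q\|_X$. These bounds are already carried out in \cite{Rocha2}, so I would verify only that the rotation invariance of $\mathfrak{L}^{p(\cdot),q(\cdot)}$ holds, which is immediate from the rearrangement definition.

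\textbf{Main obstacle.} I expect the hardest step to be controlling the sum over atoms in $Y$. This requires the vector-valued inequality for $M_{\alpha/\sigma}$ from $X^{\sigma}$ to $Y^{\sigma}$, and that is exactly Proposition \ref{vector-valued fract}. The local part near $A_jQ$ uses $L^{\widetilde q}\to L^{\widetilde r}$ boundedness of $T_{\alpha,m}$ (with $1/\widetilde r=1/\widetilde q-\alpha/n$), which is classical for $\alpha>0$ and holds for $\alpha=0$ under the invertibility of $A_i-A_j$. Combining these ingredients, \cite[Theorem 31]{Rocha2} yields boundedness on a dense subspace, and the extension to $H^{p(\cdot),q(\cdot)}$ follows by density.
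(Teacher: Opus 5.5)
Your proposal follows the paper's proof: both reduce the statement to \cite[Theorem 31]{Rocha2} by verifying the $\mathcal{O}(n)$-invariance of $\|\cdot\|_{\mathfrak{L}^{p(\cdot),q(\cdot)}}$, the identity (\ref{XYp0q0}) with boundedness of $M$ on $(Y^{1/q_0})'$, and the inputs from Theorem \ref{Asump A1A2} and Propositions \ref{norm Lpq}, \ref{s-convex}, \ref{abs cont}; your check that $u(\cdot),v(\cdot)\in\mathbb{P}_0(0,\infty)$ with $v(0)\le u(0)$ and $u(\infty)\le v(\infty)$ is a detail the paper leaves implicit. One hypothesis you omit, which the paper states, is that $\widetilde q$ in (\ref{A2}) must exceed $\max\{1,p_0 n/\alpha\}$ when $0<\alpha<n$ (resp.\ $\max\{1,p_0(1+2^{n+3}\|M\|_{(X^{1/p_0})'})\}$ when $\alpha=0$); this costs nothing since Theorem \ref{Asump A1A2} gives (\ref{A2}) for every sufficiently large $\widetilde q$ (also, for $\alpha=0$ the vector-valued inequality comes from Proposition \ref{Feff-Stein 1}, not Proposition \ref{vector-valued fract}).
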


\begin{proof}
From Definition \ref{var Lorentz}, it is clear that $\| \cdot \|_{\mathfrak{L}^{p(\cdot), q(\cdot)}}$ 
is $\mathcal{O}(n)$-invariant (i.e.: for every $f \in \mathfrak{L}^{p(\cdot), q(\cdot)}(\mathbb{R}^n)$ and $A \in \mathcal{O}(n)$, 
$f_A \in \mathfrak{L}^{p(\cdot), q(\cdot)}(\mathbb{R}^n)$ with 
$\| f_A \|_{\mathfrak{L}^{p(\cdot), q(\cdot)}} = \| f \|_{\mathfrak{L}^{p(\cdot), q(\cdot)}}$). Now, we set 
$X:= \mathfrak{L}^{p(\cdot), q(\cdot)}(\mathbb{R}^n)$ and $Y:= \mathfrak{L}^{u(\cdot), v(\cdot)}(\mathbb{R}^n)$. 
Let $0 \leq \alpha < n$, for any $p_0 \in (0, \min\{ \frac{n}{n+\alpha}, p_{-}, q_{-} \})$, we put 
$\frac{1}{q_0}:= \frac{1}{p_0} - \frac{\alpha}{n}$, then by (\ref{XYp0q0}) we have that $(Y^{1/q_0})' = ((X^{1/p_0})')^{p_0/q_0}$, and 
by Proposition \ref{acot MHL} and Theorem \ref{dual}, $M$ is bounded on 
$(Y^{1/q_0})'$. On the other hand, the constant $\widetilde{q}$ in (\ref{A2}) can be chosen such that 
$\widetilde{q} > \max \left\{1, \frac{p_0 n}{\alpha} \right\}$ or $\widetilde{q} > \max \left\{1, p_0 \left(1+2^{n+3} \| M \|_{(X^{1/p_0})'} \right) \right\}$, according to the case $0 < \alpha < n$ or $\alpha = 0$.

Finally, the theorem follows from Theorem \ref{Asump A1A2}, Propositions \ref{norm Lpq} and \ref{s-convex}, Proposition \ref{abs cont} and \cite[Theorem 31]{Rocha2}.
\end{proof}

Pablo Rocha, Instituto de Matem\'atica (INMABB), Departamento de Matem\'atica, Universidad Nacional del Sur (UNS)-CONICET, Bah\'ia Blanca, Argentina. \\
{\it e-mail:} pablo.rocha@uns.edu.ar

\end{document}